\documentclass[12pt,reqno]{amsart}
\usepackage[margin=1in]{geometry}
\usepackage{amsmath,amssymb,amsthm}
\usepackage{booktabs}
\usepackage[colorlinks=true,linkcolor=blue,citecolor=blue,urlcolor=blue]{hyperref}
\newtheorem{theorem}{Theorem}[section]
\newtheorem{lemma}[theorem]{Lemma}
\newtheorem{proposition}[theorem]{Proposition}
\newtheorem{corollary}[theorem]{Corollary}
\newtheorem{conjecture}[theorem]{Conjecture}
\theoremstyle{remark}
\newtheorem{example}[theorem]{Example}

\newcommand{\N}{\mathbb{N}}
\newcommand{\Z}{\mathbb{Z}}
\newcommand{\qbinom}[2]{\genfrac{[}{]}{0pt}{}{#1}{#2}}
\newcommand{\dom}{\trianglerighteq}
\DeclareMathOperator{\sgn}{sgn}
\author{Lilan Dai}
\address{Center for Combinatorics, LPMC, Nankai University, Tianjin 300071, P. R. China}
\email{daililan@mail.nankai.edu.cn}
\thanks{}

\author{Dun Qiu}
\address{Center for Combinatorics, LPMC, Nankai University, Tianjin 300071, P. R. China}
\email{qiudun@nankai.edu.cn}
\thanks{Dun Qiu is supported in part by the National Natural Science Foundation of China (No. 12671390),
the Natural Science Foundation of Tianjin (No. 24JCZDJC01390), and the Fundamental Research Funds for the Central Universities (No. 105-63263093).}

\author{Weichen Wang}
\address{Center for Combinatorics, LPMC, Nankai University, Tianjin 300071, P. R. China}
\email{wangweichenc@163.com}
\thanks{}

\author{Minhao Zhang}
\address{Center for Combinatorics, LPMC, Nankai University, Tianjin 300071, P. R. China}
\email{carpentercheungmath@gmail.com}
\thanks{}

\title[The Schur positivity of Lucas binomial differences]{The Schur positivity of Lucas binomial differences}
\date{\today}
\subjclass[2020]{Primary 05E05; Secondary 05A10, 05A17, 05A30, 11B39}
\keywords{Schur positivity, symmetric functions, Lucas polynomials, Lucas atoms, binomial coefficients, Gaussian polynomials}

\begin{document}
\raggedbottom

\begin{abstract}
For positive integers $a\le b\le c\le d$ with $ad=bc$, we prove that the Lucas binomial coefficients satisfy $(-1)^{a+1}(L_{b,c}[X]-L_{a,d}[X])\in\N[e_1[X],e_2[X],h_2[X]]$, resolving Bergeron's conjecture on Schur positivity in two variables. We also prove the same positivity for $L_{u,v}[X]-e_1[X]^{uv-rs}L_{r,s}[X]$ when $rs<uv$. Using Lucas atoms, we establish $e$-positivity for degree-adjusted Lucas multinomial differences under dominance order.
\end{abstract}

\maketitle

\section{Introduction}\label{sec:intro}

The Gaussian binomial coefficients, studied by Gauss~\cite{Gau}, are $q$-analogues of ordinary binomial coefficients. O'Hara \cite{OH} gave a constructive proof of their unimodality, and Zeilberger \cite{Zei} gave an algebraic version of her proof. For positive integers $a,b,c,d$ with $ad=bc$ and $a$ the smallest parameter, Zanello \cite{Zan} proved, in response to a question of Bergeron, that $\qbinom{b+c}{b}_q-\qbinom{a+d}{a}_q$ has nonnegative and unimodal coefficients in the cases $a=2$ and $a=3$. The case $a=1$ follows from Gaussian unimodality.

Sagan and Savage \cite[Theorem~3]{SS} studied analogous quotients of \emph{Lucas polynomials}, which satisfy a two-parameter recurrence. They gave combinatorial interpretations of these quotients and established positivity in the recurrence parameters. Sagan and Tirrell \cite{ST} factored the Lucas polynomials into \emph{Lucas atoms}, providing an algebraic method for studying such quotients.

We work with the Lucas recurrence in the \emph{ring of symmetric functions}. For an \emph{alphabet} $X=x_1+x_2+\cdots$, that is, a formal sum of commuting indeterminates, the Lucas polynomials $U_n[X]$ are defined by
\begin{equation}\label{eq:U}
U_0[X]:=0,\qquad U_1[X]:=1,\qquad U_{n+1}[X]:=e_1[X]U_n[X]+e_2[X]U_{n-1}[X]\quad(n\ge1).
\end{equation}
Here $e_i[X]$ and $h_i[X]$ are the \emph{elementary} and \emph{complete homogeneous symmetric functions} in $X$, respectively. Define $U_n![X]:=\prod_{i=1}^nU_i[X]$, with $U_0![X]:=1$. For $r,s\ge0$, define the \emph{Lucas binomial coefficient} as a rational function in $e_1[X]$ and $e_2[X]$ by
\begin{equation}\label{eq:Lrs}
L_{r,s}[X]:=\frac{U_{r+s}![X]}{U_r![X]U_s![X]}=\prod_{i=1}^r\frac{U_{s+i}[X]}{U_i[X]}.
\end{equation}
The quotient is a polynomial in $e_1[X]$ and $e_2[X]$, as recalled in Subsection~\ref{sec:sym-lucas}, so $L_{r,s}[A]$ is defined for every alphabet $A$ by evaluating this polynomial at $A$. In particular, $L_{r,s}[q+t]$ is its plethystic specialization at the two-letter alphabet $q+t$. Bergeron proposed the following conjecture on \emph{Schur positivity}.

\begin{conjecture}[{\cite[Conjecture~10.1]{Ber}}]\label{conj:bergeron}
Let $a,b,c,d$ be positive integers with $a\le b$, $c\le d$, and $ad=bc$. Then $L_{b,c}[q+t]-L_{a,d}[q+t]$ is Schur positive up to a global sign.
\end{conjecture}

Under the coordinate substitution $e_1[X]\mapsto1+q$, $e_2[X]\mapsto-q$, the polynomial $L_{r,s}[X]$ becomes $\qbinom{r+s}{r}_q$ (see Subsection~\ref{sec:sym-lucas}), so Conjecture~\ref{conj:bergeron} is a Lucas analogue of the Gaussian question above. Since this substitution sends $e_2[X]$ to $-q$, our positivity results do not transfer to the Gaussian difference.

Let $\N=\{0,1,2,\ldots\}$, and write $\N[e_1[X],e_2[X],h_2[X]]$ for the set of polynomials in $e_1[X]$, $e_2[X]$, and $h_2[X]$ with coefficients in $\N$; similarly for $\N[e_1[X],e_2[X]]$. By the Pieri rule, every element of $\N[e_1[X],e_2[X],h_2[X]]$ is Schur positive. Thus Theorem~\ref{thm:equal} below, together with Corollary~\ref{cor:bergeron}, gives a stronger form of Bergeron's conjecture in the ring of symmetric functions.

\begin{theorem}\label{thm:equal}
Let $a\le b\le c\le d$ be positive integers with $ad=bc$. If $a=b$, then $L_{b,c}[X]=L_{a,d}[X]$. If $a<b$, write $ad=2M+\delta$ with $\delta\in\{0,1\}$. Then, for every alphabet $X$,
\begin{equation}\label{eq:equal}
(-1)^{a+1}(L_{b,c}[X]-L_{a,d}[X])=\sum_{j=a+1}^MC_j\,e_1[X]^\delta h_2[X]^{M-j}e_2[X]^j
\end{equation}
with integers $C_j$, independent of $X$, such that $C_{a+1}=1$ and $C_j\ge\binom{M-a-1}{j-a-1}$ for $a+1\le j\le M$. In particular, the signed difference is Schur positive as a symmetric function.
\end{theorem}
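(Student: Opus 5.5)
The plan is to reduce everything to a statement about Gaussian polynomials in one variable, and then change basis. First I dispose of the case $a=b$: then $ad=bc$ forces $c=d$, so $L_{b,c}=L_{a,d}$ trivially.

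\textbf{Step 1: reduction to Gaussian polynomials.} Factor $t^2-e_1[X]t-e_2[X]=(t-\alpha)(t-\beta)$ formally, so $\alpha+\beta=e_1[X]$ and $\alpha\beta=-e_2[X]$. Then $U_n=(\alpha^n-\beta^n)/(\alpha-\beta)=\alpha^{n-1}[n]_q$ with $q=\beta/\alpha$, and hence
\[
L_{r,s}[X]=\alpha^{rs}\qbinom{r+s}{r}_q .
\]
Since $bc=ad=:N$, the difference $L_{b,c}-L_{a,d}$ equals $\alpha^N D(q)$, where
\[
D(q)=\qbinom{b+c}{b}_q-\qbinom{a+d}{a}_q
\]
is a palindromic polynomial of degree $N$.

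\textbf{Step 2: the target basis.} The relevant symmetric functions become
\[
e_2=-\alpha^2 q,\qquad h_2=e_1^2-e_2=\alpha^2(1-q+q^2),\qquad e_1=\alpha(1+q).
\]
A palindromic polynomial of degree $N=2M+\delta$ that is divisible by $(1+q)^\delta$ in the case $\delta=1$ expands uniquely in the basis
\[
(1+q)^\delta\, q^j(1-q+q^2)^{M-j},\qquad 0\le j\le M .
\]
Divisibility when $\delta=1$ holds because $q=-1$ is a root of the difference; equivalently, both sides are polynomials in $e_1,e_2$ of odd degree. This yields the unique expansion \eqref{eq:equal} with integers $C_j$ and some global sign. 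Setting $z=h_2/e_2$, I rewrite the claim as
\[
(-1)^{a+1}(L_{b,c}-L_{a,d})=e_1^\delta e_2^{M}P(z),
\]
where $P$ is a polynomial in $z$ of degree at most $M-a-1$. The inequality $C_j\ge\binom{M-a-1}{j-a-1}$ then says exactly that $P(z)-(1+z)^{M-a-1}$ has nonnegative coefficients. Since $e_2(1+z)=e_1^2$, this is the statement that
\[
(-1)^{a+1}(L_{b,c}-L_{a,d})-e_1^{\delta+2(M-a-1)}e_2^{a+1}\in\N[e_1,e_2,h_2].
\]

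\textbf{Step 3: lowest-order terms.} Next I show that $C_j=0$ for $j\le a$ and that $C_{a+1}=1$ with the sign $(-1)^{a+1}$. In the $q$-picture this amounts to comparing low-order coefficients of $D(q)$. The coefficient of $q^k$ in $\qbinom{r+s}{r}_q$ counts partitions of $k$ fitting in an $r\times s$ box. For $k\le a$ the two boxes give the same counts, because $a\le b\le c\le d$ means neither box constrains partitions of such small $k$. The first discrepancy appears at $k=a+1$, where the $a\times d$ box loses the single column partition $(1^{a+1})$ and the $b\times c$ box does not, since $b\ge a+1$. After translating the basis $q^j(1-q+q^2)^{M-j}$ back, including the sign $(-1)^j$ coming from $e_2^j$, this gives $C_{a+1}=1$ and the claimed global sign.

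\textbf{Step 4: positivity (main obstacle).} The hard part is proving $P(z)-(1+z)^{M-a-1}\ge0$ coefficientwise. I would first try induction on the parameters through the Lucas Pascal recurrence
\[
L_{r,s}=U_{s+1}L_{r-1,s}+e_2U_{r-1}L_{r,s-1},
\]
together with positivity of $U_n$ in the $e_1,h_2,e_2$ basis. However, the constraint $ad=bc$ is not preserved by such steps, so a direct induction will likely not close. A fallback is the Lucas atom factorization of Sagan--Tirrell, recalled earlier, which writes each $L_{r,s}$ as a product of atoms. One would then compare the multisets of atoms in $L_{b,c}$ and $L_{a,d}$, cancel the common factor $F$, and reduce to proving positivity of $A-B$ for products $A,B$ of few atoms, with $F$ itself positive. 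I expect this comparison, and the extraction of the explicit term $e_1^{\delta+2(M-a-1)}e_2^{a+1}$, to be the crux. The remaining pieces — Schur positivity via the Pieri rule and the independence of the $C_j$ from $X$, which holds because everything is an identity in $e_1,e_2$ — are routine.
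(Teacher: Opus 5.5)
Your Steps 1--3 are sound in outline. Your $q=\beta/\alpha$ is the paper's $-z$ from Lemma~\ref{lem:binet}. The box-counting argument is correct: $D(q)=q^{a+1}+O(q^{a+2})$, since only $(1^{a+1})$ is lost from the $a\times d$ box. This gives $C_j=0$ for $j\le a$ and $C_{a+1}=1$, a valid substitute for the paper's use of Proposition~\ref{prop:kernel}.

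There is one slip. With $e_2=-\alpha^2q$ one has $h_2=e_1^2-e_2=\alpha^2(1+3q+q^2)$, not $\alpha^2(1-q+q^2)$. This does not affect Step 3, since both quadratics have constant term $1$. But your basis would compute every $C_j$ with $j>a+1$ incorrectly.

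The genuine gap is Step 4, which is the entire content of the theorem and which you leave open. You already note that the Pascal-recurrence induction does not preserve $ad=bc$. The Lucas-atom fallback does not reduce the difficulty either. After cancelling common atoms you still face a difference of two atom products that must be shown to lie in $\N[e_1,e_2,h_2]$. The $e$-positivity of the atoms cannot give this, because the residual difference is in general not $e$-positive.

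For example, for $(a,b,c,d)=(2,4,4,8)$ the atom multiplicities give $L_{2,8}-L_{4,4}=P_5\,(P_3P_9P_{10}-P_6P_7P_8)$. Specializing at $e_1=0$ shows that the second factor has coefficient $-1$ on $e_2^6$. This is the source of the $-1$ on $e_2^8$ noted after Theorem~\ref{thm:multi}. So you need a mechanism that computes and bounds the coefficients in the basis $e_1^\delta h_2^{M-j}e_2^j$ directly, and neither proposed route provides one.

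The paper supplies this mechanism through Lagrange inversion. Writing $F(1,z/(1-z)^2)=G(z)/(1-z)^N$, it obtains
$c_j(F)=[z^j]\,(1-z^2)G(z)/\bigl((1-z)^\delta(1-3z+z^2)^{M-j+1}\bigr)$.
Here $G=z^{a+1}H(z)/(-z;-z)_b$ for an explicit polynomial $H$ with $H(0)=1$, built from the products $\prod(1-(-z)^i)$.

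For $a\ge2$, the paper proves the negative-mass bound $H^-(1/2)<1$ by a case analysis on $b-a$. It then absorbs these negative coefficients using $V(z)$, whose coefficients satisfy $v_n\ge2v_{n-1}$; this is proved via the Chen--Fox--Lyndon factorization. The result is $W(z)\succeq1/(1-z)$, which gives $C_j\ge\binom{M-a-1}{j-a-1}$.

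The case $a=1$ is not covered by that estimate and is handled separately by the KOH identity, which shows that $L_{b,c}-L_{1,bc}$ is $e$-positive.
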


Using $L_{r,s}[X]=L_{s,r}[X]$, Theorem~\ref{thm:equal} gives the sign in Conjecture~\ref{conj:bergeron} and the cases in which the difference vanishes.

\begin{corollary}\label{cor:bergeron}
Let $a\le b$ and $c\le d$ be positive integers with $ad=bc$. Then
\begin{enumerate}
\item[(a)] $L_{b,c}[X]=L_{a,d}[X]$ for every alphabet $X$ if and only if $a=b$ or $a=c$.
\item[(b)] If $a\ne b$ and $a\ne c$, let $\varepsilon:=\sgn(c-a)(-1)^{\min(a,c)+1}$. Then $\varepsilon(L_{b,c}[X]-L_{a,d}[X])\in\N[e_1[X],e_2[X],h_2[X]]$ for every alphabet $X$, and $\varepsilon$ is the unique sign for which $\varepsilon(L_{b,c}[X]-L_{a,d}[X])$ is Schur positive for every alphabet $X$ with at least two letters.
\end{enumerate}
In particular, Conjecture~\ref{conj:bergeron} holds.
\end{corollary}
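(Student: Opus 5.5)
The plan is to reduce Corollary~\ref{cor:bergeron} to Theorem~\ref{thm:equal} using the symmetry $L_{r,s}[X]=L_{s,r}[X]$, and then to read off uniqueness of the sign from the explicit expansion \eqref{eq:equal}.

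\textbf{Reduction to the ordered case.} Given $a\le b$, $c\le d$ with $ad=bc$, we have $a\le b$ and $c\le d$, so both $a$ and $c$ are candidates for the smallest parameter. We split into two cases.

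If $a\le c$, then from $ad=bc$ and $a\le c$ we get $d\ge b$. Moreover, $a\le b$ forces $c\le d$. Order the four numbers as $a\le\min(b,c)\le\max(b,c)\le d$. Since $L_{b,c}=L_{c,b}$, Theorem~\ref{thm:equal} applies with middle pair $\{b,c\}$. It yields equality exactly when $a=\min(b,c)$; that is, $a=b$, or $a=c$, in which case $b=d$ and $L_{b,c}=L_{c,d}=L_{a,b}\dots$. More directly, when $a=c$ we have $b=d$, so $L_{b,c}=L_{d,a}=L_{a,d}$.

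If $c<a$, then $c$ is the minimum and $b>d$. Rewrite the difference as $L_{b,c}-L_{a,d}=-(L_{d,a}-L_{c,b})$ with ordering $c\le\min(a,d)\le\max(a,d)\le b$ and $cb=ad$. Applying Theorem~\ref{thm:equal} to the ordered quadruple $(c,\cdot,\cdot,b)$ gives $(-1)^{c+1}(L_{a,d}-L_{c,b})$ in the positive cone. Equality occurs iff $c=\min(a,d)$, which is impossible here: $c<a$ by assumption, and $c=d$ would force $a=b$ and hence $c=a$. This yields the sign $-(-1)^{c+1}$, matching $\varepsilon=\sgn(c-a)(-1)^{\min(a,c)+1}$. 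In the first case, with $a<\min(b,c)$, the sign is $(-1)^{a+1}$, again matching $\varepsilon$. This proves (a) (the "if" direction; "only if" is part of the uniqueness below) and the positivity in (b).

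\textbf{Uniqueness and nonvanishing.} By \eqref{eq:equal}, $\varepsilon(L_{b,c}-L_{a,d})=\sum_j C_j e_1^\delta h_2^{M-j}e_2^j$ with $C_{a'+1}=1$ for the relevant minimum $a'$, and all $C_j\ge0$. This is a nonzero nonnegative combination of products of Schur functions, hence a nonzero Schur positive function. Its expansion contains $s_{2^{j}\cdots}$-type terms; in particular, each term $h_2^{M-j}e_2^je_1^\delta$ contains $s_\lambda$ with $\lambda$ having at most two rows, for instance $s_{(M+j+\delta,\,M-j)}$ or a similar dominant-type term. Such terms survive under specialization to any alphabet with at least two letters.

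The main obstacle is to show that the specialization is nonzero for every two-letter alphabet. To check this, evaluate at $X=q+t$. There $e_2=qt$ and $h_2=q^2+qt+t^2$, so each monomial $e_1^\delta h_2^{M-j}e_2^j$ specializes to a nonzero Schur-positive polynomial, namely a sum of $s_\lambda(q,t)$ with $\ell(\lambda)\le2$ and positive coefficients. The sum is then nonzero and Schur positive, so $-\varepsilon$ times it is not Schur positive. Hence $\varepsilon$ is unique, and the difference is nonzero, completing (a). Conjecture~\ref{conj:bergeron} follows by specializing $X=q+t$.
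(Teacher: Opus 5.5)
Your route is the same as the paper's: reduce to Theorem~\ref{thm:equal} by symmetry in the two cases $c>a$ and $c<a$, and get uniqueness of the sign because a nonzero element of $\N[e_1[X],e_2[X],h_2[X]]$ stays nonzero at $X=q+t$. However, one step in your case $c<a$ is false.

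You claim that $c=\min(a,d)$ cannot occur, because ``$c=d$ would force $a=b$ and hence $c=a$.'' The first implication holds, since $ac=bc$ gives $a=b$. The second does not. For $(a,b,c,d)=(3,3,2,2)$ we have $c<a$ and $c=d$, and $L_{3,2}[X]-L_{3,2}[X]=0$. As written, your case analysis asserts a nonzero difference in a situation that (a) lists as an equality case. The correct statement, as in the paper, is that for $c<a$ the difference vanishes exactly when $c=d$, that is, when $a=b$.

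The repair is short. The ``if'' direction of (a) is trivial: $a=b$ forces $c=d$, and $a=c$ forces $b=d$. Under the hypotheses of (b), the assumption $a\ne b$ rules out $c=d$, so $c<\min(a,d)$. Then Theorem~\ref{thm:equal} applies to $(c,\min(a,d),\max(a,d),b)$ with $C_{c+1}=1$. Your sign computation $\varepsilon=-(-1)^{c+1}$, the nonvanishing needed for the ``only if'' direction of (a), and the uniqueness argument all go through unchanged.

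A smaller inaccuracy: $e_1[X]^\delta h_2[X]^{M-j}e_2[X]^j$ does not in general contain $s_{(M+j+\delta,M-j)}$. Its two-row Schur terms are those of its specialization at $q+t$. There $e_2[q+t]^j=(qt)^j$ shifts both rows by $j$, so every such term has second row at least $j$. Hence the claim fails for $j>M/2$; for example, $e_2[X]^M$ has no $s_{(2M)}$ term. This sentence is not load-bearing, because you then argue directly at $X=q+t$, where each generator specializes to a nonzero polynomial with nonnegative coefficients, exactly as the paper does. You can simply drop it.
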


When $rs<uv$, multiplying $L_{r,s}[X]$ by $e_1[X]^{uv-rs}$ gives a polynomial of the same degree as $L_{u,v}[X]$. We also obtain a positivity result for arbitrary positive integers $r,s,u,v$ with $rs<uv$.

\begin{theorem}\label{thm:unequal}
Let $r,s,u,v$ be positive integers with $rs<uv$. Write $uv=2M+\delta$ with $\delta\in\{0,1\}$. For every alphabet $X$, we have
\[
L_{u,v}[X]-e_1[X]^{uv-rs}L_{r,s}[X]=\sum_{j=1}^MD_j\,e_1[X]^\delta h_2[X]^{M-j}e_2[X]^j
\]
with integers $D_j$, independent of $X$, such that $D_1=uv-rs$ and $D_j\ge(uv-rs)\binom{M-1}{j-1}$ for $1\le j\le M$. The inequalities for the $D_j$ are equivalent to
\begin{equation}\label{eq:unequal-base}
L_{u,v}[X]-e_1[X]^{uv-rs}L_{r,s}[X]-(uv-rs)e_1[X]^{uv-2}e_2[X]\in\N[e_1[X],e_2[X],h_2[X]].
\end{equation}
\end{theorem}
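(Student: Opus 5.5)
Throughout, write $e_1=e_1[X]$, $e_2=e_2[X]$, $h_2=h_2[X]=e_1^2-e_2$. Every polynomial in $e_1,e_2$ that is homogeneous of degree $n=2M+\delta$ (with $\deg e_1=1$, $\deg e_2=2$) can be written uniquely as $e_1^\delta\sum_{j=0}^M c_j h_2^{M-j}e_2^j$. Indeed, $e_1^2=h_2+e_2$, so $e_1^{2k}=(h_2+e_2)^k$. The plan is to work in these coordinates and use the Lucas atom factorization of Sagan and Tirrell, recalled in Subsection~\ref{sec:sym-lucas}.

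\textbf{Coordinates for the two terms.} First I would record the expansion of $U_n$. With $y:=e_2/h_2$, the rescaled quantity $U_n/(e_1^{\delta_n}h_2^{\lfloor (n-1)/2\rfloor})$ is a polynomial in $y$. Each $L_{u,v}$ is homogeneous of degree $uv$, so it has the form $e_1^\delta h_2^M P_{u,v}(y)$. Since $e_1^{uv-rs}L_{r,s}$ has the same degree, it has the form $e_1^\delta h_2^M Q(y)$, where
\[
Q(y)=(1+y)^{\lfloor (uv-rs)/2\rfloor}\cdot(\text{coordinate polynomial of } L_{r,s}),
\]
up to a parity adjustment. The constant terms are equal: setting $e_2=0$ gives $U_n=e_1^{n-1}$, so both sides reduce to $e_1^{uv}$, and $D_0=0$. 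For the coefficient of $y$ I would compute the linear term in $e_2$. Since $U_n=e_1^{n-1}+(n-2)e_1^{n-3}e_2+\cdots$, one gets $L_{r,s}=e_1^{rs}+(\text{coefficient }\kappa_{r,s})e_1^{rs-2}e_2+\cdots$. Converting to $h_2$ coordinates then shows that $D_1=uv-rs$. This is exactly the linear-term identity behind \eqref{eq:unequal-base}. The equivalence of the inequalities with \eqref{eq:unequal-base} is routine: $(uv-rs)e_1^{uv-2}e_2=(uv-rs)e_1^\delta(h_2+e_2)^{M-1}e_2$ has coordinates $(uv-rs)\binom{M-1}{j-1}$. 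The converse direction uses the fact that $\N[e_1,e_2,h_2]$ in degree $n$ consists precisely of those polynomials whose coordinates $c_j$ are all nonnegative, which again follows from $e_1^2=h_2+e_2$.

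\textbf{Main inequality.} The substantive step is $D_j\ge (uv-rs)\binom{M-1}{j-1}$. I would reduce it to the case $rs=uv-1$ by telescoping. Suppose $(r,s)\to(r',s')$ with $r's'>rs$. Then
\[
L_{u,v}-e_1^{uv-rs}L_{r,s}=(L_{u,v}-e_1^{uv-r's'}L_{r',s'})+e_1^{uv-r's'}(L_{r',s'}-e_1^{r's'-rs}L_{r,s}).
\]
Multiplying by $e_1^k$ preserves the bound pattern, because the binomials $\binom{M-1}{j-1}$ convolve with $(1+y)$. So the contributions add, although a chain that increases the product by exactly one at each step is not always available. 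A cleaner route is to prove two statements. First, $L_{u,v}-e_1^{uv-n}L_{\lambda}$ satisfies the pattern for some "minimal" object of degree $n$. Second, $e_1\cdot L_{r,s}$ versus $L_{r,s+1}$ or $L_{r+1,s}$ satisfies it, via the increments
\[
L_{r,s+1}=L_{r,s}\cdot\frac{U_{r+s+1}}{U_{s+1}},\qquad \frac{U_{m+1}}{U_{m}}\text{-type atom quotients}.
\]
Concretely, I would show $L_{r,s+1}-e_1^{r}L_{r,s}\ge r\,e_1^{r(s+1)-2}e_2$ coefficientwise, using the atom factorization and the positivity of $U_{n}-e_1U_{n-1}=e_2U_{n-2}$. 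Then I would chain from $(1,1)$ up to $(u,v)$ and from $(1,1)$ up to $(r,s)$. This chaining requires comparing $L_{r,s}$ against $e_1^{rs-1}L_{1,1}=e_1^{rs}$ in both directions, and the final step subtracts the two chains. That subtraction is where difficulty arises.

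\textbf{Expected obstacle.} The main obstacle is that subtraction: the positivity of $L_{u,v}-e_1^{uv}$ and of $L_{r,s}-e_1^{rs}$ does not by itself give positivity of their difference with weight $uv-rs$. I would first try to prove a sharper monotonicity: the coordinate sequence of $(L_{u,v}-e_1^{uv})/e_1^{\delta}h_2^M$, divided termwise by $\binom{M-1}{j-1}$, increases with $uv$ at rate at least one per unit of degree. I would establish this by induction on $u+v$, using the recurrence $L_{u,v}=L_{u-1,v}\,U_{u+v}/U_u$ and explicit $y$-expansions of the atoms. If that fails, the fallback is to use Theorem~\ref{thm:equal}-style explicit coefficient formulas to handle the range of small $j$ directly.
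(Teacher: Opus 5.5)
Your coordinate bookkeeping is fine. The equivalence of the bounds on $D_j$ with \eqref{eq:unequal-base}, and the values $D_0=0$, $D_1=uv-rs$, follow from $e_1^2=h_2+e_2$ and the linear coefficient $[y]L_{r,s}(1,y)=rs-1$; you should actually compute that coefficient rather than leave it as $\kappa_{r,s}$.

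The main inequality, however, is not proved, and the route you sketch cannot prove it. First, your chains move $(r,s)$ to $(u,v)$ one coordinate at a time, e.g.\ $L_{r,s+1}$ versus $e_1^rL_{r,s}$. Even after swapping, such a chain exists only when $r\le u$ and $s\le v$. The hypothesis $rs<uv$ allows incomparable pairs such as $(r,s)=(6,6)$ and $(u,v)=(1,37)$. Routing through $(1,1)$ only gives lower bounds on $L_{r,s}$, and so produces a difference of two positive quantities, as you noticed. The proposed ``sharper monotonicity'' is neither precisely formulated nor proved.

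Second, and more fundamentally, every increment you plan to use (atom quotients, $U_n-e_1U_{n-1}=e_2U_{n-2}$) is $e$-positive, but the difference in the theorem need not be. For $(u,v)=(1,37)$ and $(r,s)=(6,6)$ the difference is $U_{38}-e_1L_{6,6}$. Its coefficient of $e_1e_2^{18}$ is
\[
\binom{19}{18}-\binom{6}{3}=19-20=-1,
\]
using \eqref{eq:U-explicit} for the first term and the atom factorization $L_{6,6}=P_4P_7P_8P_9P_{10}P_{11}P_{12}$ evaluated at $(e_1,e_2)=(0,1)$ for the second. So no argument that assembles the difference from $e$-positive pieces can work. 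The generator $h_2$ must enter through an estimate that is genuinely not $e$-positive, and nothing in your proposal supplies one.

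The missing idea is an \emph{upper} bound for $L_{r,s}$. The paper gets it by routing both sides through the one-row coefficients $U_n=L_{1,n-1}$, using the decomposition \eqref{eq:decomp}, which has three pieces:
\begin{enumerate}
\item[(i)] $L_{u,v}-U_{uv+1}$ is $e$-positive by Lemma~\ref{lem:KOH}, since $L_{1,uv}$ is one summand of the O'Hara--Zeilberger expansion.
\item[(ii)] $U_{uv+1}-e_1^{uv-rs-1}U_{rs+2}=e_2\sum_ie_1^iU_{uv-1-i}$ telescopes. It contributes $(uv-rs-1)e_1^{uv-2}e_2$ plus an $e$-positive remainder.
\item[(iii)] The crucial upper bound is $U_{rs+2}-e_1L_{r,s}-e_1^{rs-1}e_2\in\N[e_1,e_2,h_2]$ (Proposition~\ref{prop:upper}). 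This is precisely where the example above shows $e$-positivity fails.
\end{enumerate}
The paper proves (iii) analytically. Substituting $e_2\mapsto z/(1-z)^2$ turns $L_{r,s}$ into a Gaussian polynomial at $q=-z$. Lagrange inversion then expresses the $h_2$-coordinates $c_j$ as coefficient extractions. The bound $c_j\ge\binom{M-1}{j-1}$ follows from $J_{r,s}^-(1/2)<1$ (Lemma~\ref{lem:J}) together with $v_n\ge2v_{n-1}$ (Lemma~\ref{lem:V}). Without some substitute for this upper comparison, your proposal does not reach the theorem.
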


For a partition $\alpha=(\alpha_1,\ldots,\alpha_\ell)$ of $n\ge1$, we consider the symmetric-function version of the \emph{Lucas multinomial coefficient} \cite{Cera}
\[
L_\alpha[X]:=\frac{U_n![X]}{U_{\alpha_1}![X]\cdots U_{\alpha_\ell}![X]},
\]
where the quotient is taken in the fraction field of $\Z[e_1[X],e_2[X]]$. In particular, $L_{(s,r)}[X]=L_{r,s}[X]$. Repeated use of \eqref{eq:Lrs} shows that $L_\alpha[X]$ is a polynomial in $e_1[X]$ and $e_2[X]$, so $L_\alpha[A]$ is again defined for every alphabet $A$. Let $d(\alpha):=\deg L_\alpha[X]$ in the grading $\deg e_1[X]=1$, $\deg e_2[X]=2$; by Lemma~\ref{lem:factor}, $d(\alpha)=\sum_{i<j}\alpha_i\alpha_j$. We write $\ell(\alpha)$ for the number of parts of $\alpha$. We say that $\alpha$ dominates $\beta$, written $\alpha\unrhd\beta$, if $\alpha_1+\cdots+\alpha_k\ge\beta_1+\cdots+\beta_k$ for all $k$, where missing parts are taken to be zero.

\begin{theorem}\label{thm:multi}
Let $\alpha\dom\beta$ be partitions of $n\ge1$, and let $\kappa:=d(\beta)-d(\alpha)-\ell(\beta)+\ell(\alpha)$. Then $d(\beta)-d(\alpha)\ge0$, $\kappa\ge0$, and, for every alphabet $X$,
\begin{equation}\label{eq:multi-positive}
L_\beta[X]-e_1[X]^{d(\beta)-d(\alpha)}L_\alpha[X]\in\N[e_1[X],e_2[X]].
\end{equation}
If $\kappa>0$, then $d(\beta)\ge2$ and
\begin{equation}\label{eq:multi-base}
L_\beta[X]-e_1[X]^{d(\beta)-d(\alpha)}L_\alpha[X]-\kappa\,e_1[X]^{d(\beta)-2}e_2[X]\in\N[e_1[X],e_2[X]].
\end{equation}
Moreover, $L_\beta[X]=e_1[X]^{d(\beta)-d(\alpha)}L_\alpha[X]$ for every alphabet $X$ if and only if $\kappa=0$, and this happens exactly when $\beta$ is obtained from $\alpha$ by replacing some parts equal to $2$ by pairs of parts equal to $1$ (possibly none).
\end{theorem}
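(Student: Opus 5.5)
We plan to work through Lucas atoms. Write $U_n[X]=\prod_{d\mid n,\,d\ge2}P_d[X]$, where each atom $P_d[X]\in\N[e_1[X],e_2[X]]$ is homogeneous of degree $\varphi(d)$ in the grading of the statement. This is the Sagan--Tirrell factorization, which we take as recorded in Lemma~\ref{lem:factor}. Then
\[
L_\alpha[X]=\prod_{d\ge2}P_d[X]^{m_d(\alpha)},\qquad m_d(\alpha)=\lfloor n/d\rfloor-\sum_i\lfloor \alpha_i/d\rfloor\ge0 ,
\]
and $d(\alpha)=\sum_d\varphi(d)m_d(\alpha)$. This gives $d(\alpha)=\sum_{i<j}\alpha_i\alpha_j$, because $\sum_{d\le m}\varphi(d)\lfloor m/d\rfloor=\binom{m+1}{2}$.

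The first key step is the monotonicity $m_d(\beta)\ge m_d(\alpha)$ for every $d\ge2$ whenever $\alpha\dom\beta$. It suffices to check this for an elementary Robin Hood move $(\ldots,x,\ldots,y,\ldots)\to(\ldots,x-1,\ldots,y+1,\ldots)$ with $x-1\ge y+1$, since dominance is generated by such moves. We need $\lfloor x/d\rfloor+\lfloor y/d\rfloor\ge\lfloor (x-1)/d\rfloor+\lfloor (y+1)/d\rfloor$. The left side exceeds the right side by $[d\mid x]-[d\mid y+1]$. If $d\mid y+1$ and $d\nmid x$, then $y+1\le x-1$ puts no multiple of $d$ in $\{x\}$, and the inequality can fail. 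So the pointwise claim is false, and we must refine the argument.

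The statement itself is robust, however. Compare each atom with a power of $e_1$: we have $P_d[X]\cdot(\text{stuff})$, and the natural object is $P_d[X]-e_1[X]^{\varphi(d)}$. Observe that $P_d[X]\in\N[e_1,e_2]$ has leading monomial $e_1^{\varphi(d)}$ with coefficient $1$. For $d\ge3$, the coefficient of $e_1^{\varphi(d)-2}e_2$ is positive, as seen from $U_n=\sum_k\binom{n-1-k}{k}e_1^{n-1-2k}e_2^k$ and the atom quotients. We therefore aim for a more robust inequality. Write $L_\alpha=e_1^{d(\alpha)}F_\alpha(t)$ with $t=e_2/e_1^2$, where $F_\alpha$ is a polynomial in $t$ with constant term $1$, and show $F_\beta-F_\alpha\in\N[t]$ coefficientwise. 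Since each $F_{P_d}=1+(\text{nonnegative})$, the ratio $F_\beta/F_\alpha$ is a product of atom factors. When some exponent drops, we instead pair it with a larger divisor-count gain. So reduce to a single elementary move $\beta=\alpha$ with $(x,y)\to(x-1,y+1)$. Then
\[
\frac{L_\beta}{L_\alpha}=\frac{U_x!\,U_y!}{U_{x-1}!\,U_{y+1}!}=\frac{U_x}{U_{y+1}},
\]
so we need $U_x[X]\cdot G-e_1^{x-y-1}U_{y+1}[X]\cdot G\in\N$ for $G=L_\alpha/U_x\cdot\ldots$. In effect, we need that $U_x-e_1^{x-y-1}U_{y+1}$ has nonnegative coefficients and that the multiplier is positive. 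The explicit formula gives $U_x-e_1^{x-y-1}U_{y+1}=\sum_k\bigl[\binom{x-1-k}{k}-\binom{y-k}{k}\bigr]e_1^{x-1-2k}e_2^k$, which is nonnegative since $x-1\ge y$. The obstacle is that this is a statement about $L_\beta-e_1^{\cdot}L_\alpha=L_\alpha\cdot(U_x/U_{y+1}-e_1^{\cdot})$, with a division by $U_{y+1}$. To handle it, write $L_\beta=L_{\gamma}\cdot U_x!U_y!/(\ldots)$ with the common factor $L_\gamma\in\N[e_1,e_2]$. Here $\gamma$ merges things so that $L_\beta=L_\gamma\cdot L_{(x-1,y+1)}$ and $L_\alpha=L_\gamma L_{(x,y)}$, where $\gamma$ replaces the two parts by a single part $x+y$. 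This reduces everything to the two-part case $L_{y+1,x-1}-e_1^{x-y-1}L_{y,x}$. That case is Theorem~\ref{thm:unequal}, with a stronger $\N[e_1,e_2]$ conclusion that we expect to come from the same proof; positivity then multiplies through $L_\gamma$ and telescopes along a chain of moves, since $e_1$-powers are positive.

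For \eqref{eq:multi-base}, track the coefficient of $e_1^{d-2}e_2$. By the formula above, a single move contributes $\binom{x-2}{1}-\binom{y-1}{1}=x-y-1$ in the two-part factor, which equals the change in $d-\ell$. These contributions add along the chain because all pieces have positive leading coefficient $1$. The chain total is therefore $\kappa$, the change in $d-\ell$, which gives \eqref{eq:multi-base}, and $\kappa\ge0$ follows. Equality holds iff $\kappa=0$, which forces every move to have $x-y-1=0$ and $y=0$, i.e.\ $2\to1+1$. I expect the main obstacle to be the two-part $\N[e_1,e_2]$ positivity, i.e.\ strengthening Theorem~\ref{thm:unequal} from $h_2$-expansions to $e_1,e_2$ coefficients.
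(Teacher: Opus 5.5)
Your reduction matches the paper's up to the key step: you reduce to a single move, split $L_\alpha=L_\gamma L_{(x,y)}$ and $L_\beta=L_\gamma L_{(x,y)\to(x-1,y+1)}$, and telescope along a dominance chain. The gap is that key step, the two-part $e$-positivity
\[
L_{y+1,x-1}[X]-e_1[X]^{x-y-1}L_{y,x}[X]\in\N[e_1[X],e_2[X]],
\]
which you do not prove.

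Theorem~\ref{thm:unequal} only gives membership in $\N[e_1[X],e_2[X],h_2[X]]$. That is too weak for \eqref{eq:multi-positive} and \eqref{eq:multi-base}. Its proof also cannot be upgraded as you hope, because it passes through $U_{rs+2}-e_1L_{r,s}$ (Proposition~\ref{prop:upper}), which is not $e$-positive. For $r=s=6$, the coefficient of $e_1e_2^{18}$ in $U_{38}-e_1L_{6,6}$ is
\[
\binom{19}{18}-\qbinom{12}{6}_{-1}=19-20=-1 .
\]
Here the second number is $L_{6,6}(0,1)$. So already the instance $(u,v)=(1,37)$, $(r,s)=(6,6)$ of Theorem~\ref{thm:unequal} is not $e$-positive. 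Positivity of the adjacent difference must therefore come from its special structure.

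Your own computation shows why this is not automatic. You have $L_{y+1,x-1}-e_1^{t}L_{y,x}=(L_{y,x}/U_{y+1})(U_x-e_1^tU_{y+1})$ with $t=x-y-1$. The multiplier $L_{y,x}/U_{y+1}$ is in general not a polynomial; for the move $(4,1)\to(3,2)$ it is $U_5/U_2$. So the $e$-positivity of $U_x-e_1^tU_{y+1}$ does not transfer.

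The missing idea is to split off $g:=\gcd(x,y+1)$:
\[
L_{y+1,x-1}-e_1^{t}L_{y,x}=\frac{U_g\,L_{y,x}}{U_{y+1}}\cdot\frac{U_x-e_1^{t}U_{y+1}}{U_g}.
\]
\emph{First factor.} It is a product of Lucas atoms.
Consider an atom $P_r$ with $r\mid y+1$ and $r\nmid x$; this is exactly where your pointwise monotonicity failed. Such an atom already occurs in $L_{y,x}$ because of the carry $\lfloor (x+y)/r\rfloor=\lfloor x/r\rfloor+\lfloor y/r\rfloor+1$. The atoms with $r\mid g$ are supplied by $U_g$.

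\emph{Second factor.} Write $y+1=gu$ and $x=gv$, so $t=g(v-u)$, and telescope over the multiples of $g$. The addition formula $U_{m+n}=U_mU_{n+1}+e_2U_{m-1}U_n$ then gives
\[
\frac{U_{g(j+1)}-e_1^gU_{gj}}{U_g}=\frac{U_{gj}}{U_g}\bigl(U_{g+1}-e_1^g\bigr)+e_2\,U_{gj-1},
\]
which is $e$-positive. With this lemma in place, your chain argument and your derivation of \eqref{eq:multi-base} from the linear coefficient go through.

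There is also a smaller slip in the equality case. For $y=0$ the per-move change in the linear coefficient is $x-2$, not $x-y-1$, since $U_1=1$ has no linear term; this does agree with the change in $d-\ell$, because $\ell$ increases by one. So the vanishing condition is $(x,y)=(2,0)$. Your stated condition ``$x-y-1=0$ and $y=0$'' contradicts $x\ge y+2$.
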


Unlike the multinomial differences in Theorem \ref{thm:multi}, the signed binomial differences in Theorem \ref{thm:equal} need not be $e$-positive. For example, for $(a,b,c,d)=(2,4,4,8)$, the coefficient of $e_2[X]^8$ in the signed difference $L_{2,8}[X]-L_{4,4}[X]$ is $-1$.

Section~\ref{sec:background} recalls the symmetric function, Lucas specializations, and coefficient estimates used below. Section~\ref{sec:equal} proves Theorem~\ref{thm:equal}, Corollary~\ref{cor:bergeron}, and Theorem~\ref{thm:unequal} by extracting coefficients after the substitution $e_1[X]\mapsto1$, $e_2[X]\mapsto z/(1-z)^2$, which turns Lucas binomial coefficients into Gaussian polynomials at $q=-z$, and by bounding finite products. Section~\ref{sec:multi} proves Theorem~\ref{thm:multi} by comparing Lucas atoms of consecutive partitions in a dominance chain. 

A Lean~4 formalization of Theorem~\ref{thm:equal}, Theorem ~\ref{thm:unequal} and Theorem~\ref{thm:multi} is available at \url{https://github.com/Lilan-Dai/lucas-schur-positivity}.

\section{Preliminaries}\label{sec:background}

In Subsection~\ref{sec:sym-lucas} we recall the symmetric-function identities and Lucas specializations; see Macdonald \cite{Mac} and Sagan and Tirrell \cite{ST}. In Subsection~\ref{sec:coords} we derive the coefficient formulas used for the binomial differences. In Subsection~\ref{sec:criterion} we establish the coefficient estimates, using the factorization of words into Lyndon words.

\subsection{Symmetric functions and Lucas polynomials}\label{sec:sym-lucas}

A \emph{partition} is a weakly decreasing sequence $\lambda=(\lambda_1,\lambda_2,\ldots)$ of nonnegative integers with finitely many nonzero terms; trailing zeros are ignored. Its \emph{length} is $\ell(\lambda):=\#\{i:\lambda_i>0\}$ and its \emph{size} (or \emph{weight}) is $|\lambda|:=\sum_i\lambda_i$. We write $\lambda\vdash n$ if $|\lambda|=n$.

Let $X=x_1+x_2+\cdots$ be an alphabet of indeterminates, and let $\Lambda$ be the ring of symmetric functions in $X$ over $\Z$. We write $e_r[X]$, $h_r[X]$, and $s_\lambda[X]$ for the elementary, complete homogeneous, and Schur symmetric functions in $X$. In particular,
\[
\sum_{r\ge0}e_r[X]z^r=\prod_{i\ge1}(1+x_i z),\qquad
\sum_{r\ge0}h_r[X]z^r=\prod_{i\ge1}(1-x_i z)^{-1}.
\]
We use $f[A]$ for \emph{plethystic substitution}; Macdonald denotes plethysm by $f\circ g$ \cite{Mac}. Writing $p_m[X]:=\sum_i x_i^m$ for $m\ge1$, substitution at an alphabet $A=a_1+a_2+\cdots$ is the ring homomorphism determined over $\mathbb{Q}$ by $p_m[A]=\sum_i a_i^m$. Thus $p_m[q+t]=q^m+t^m$ and $f[q+t]=f(q,t,0,\ldots)$. The elements $e_1[X]$ and $e_2[X]$ are algebraically independent in $\Lambda$.

The recurrence \eqref{eq:U} gives $U_2[X]=e_1[X]$, $U_3[X]=e_1[X]^2+e_2[X]$, and $U_4[X]=e_1[X]^3+2e_1[X]e_2[X]$. More generally,
\begin{equation}\label{eq:U-explicit}
U_n[X]=\sum_{k=0}^{\lfloor(n-1)/2\rfloor}\binom{n-1-k}{k}e_1[X]^{n-1-2k}e_2[X]^k\qquad(n\ge1).
\end{equation}
Clearly $L_{r,s}[X]=L_{s,r}[X]$. For $F[X]\in\Z[e_1[X],e_2[X]]$, write $F(u,v)$ for the \emph{coordinate substitution} $e_1[X]\mapsto u$, $e_2[X]\mapsto v$. Thus $F(1,y)$ is obtained by substituting $1$ and $y$ for the two generators, whereas evaluation at the two-letter alphabet $q+t$ gives $e_1[q+t]=q+t$, $e_2[q+t]=qt$, and $h_2[q+t]=q^2+qt+t^2$. 

Under the coordinate substitution $(e_1[X],e_2[X])=(1,1)$, the polynomials $L_{r,s}[X]$ become the \emph{Fibonomial coefficients}; at $(e_1[X],e_2[X])=(2,-1)$, the recurrence gives $U_n(2,-1)=n$ and $L_{r,s}(2,-1)=\binom{r+s}{r}$. Sagan and Tirrell \cite{ST} observed that $(e_1[X],e_2[X])=(1+q,-q)$ gives $U_n(1+q,-q)=[n]_q$ (in the notation recalled below), hence $L_{r,s}(1+q,-q)=\qbinom{r+s}{r}_q$. This is a coordinate substitution, distinct from evaluation at $q+t$.
Sagan and Savage \cite{SS} proved that $L_{r,s}[X]\in\N[e_1[X],e_2[X]]$; see also Sagan and Tirrell \cite{ST}. Its degree is $rs$ when $\deg e_1[X]=1$ and $\deg e_2[X]=2$.

We write $[y^k]f(y)$ for the coefficient of $y^k$ in $f(y)$, with $[y^k]f(y)=0$ for $k<0$. We also set $\binom nk=0$ for $k<0$. For $F[X]\in\Z[e_1[X],e_2[X]]$, $[e_1[X]^ie_2[X]^k]F[X]$ denotes its coefficient in the polynomial basis in $e_1[X]$ and $e_2[X]$. For a \emph{formal power series} $f(z)$, the notation $f(z)=O(z^k)$ means that $z^k$ divides $f(z)$; we use the same convention with other variables.

We use the identities $e_1[X]=s_{(1)}[X]$,
$e_2[X]=s_{(1,1)}[X]$, and $h_2[X]=s_{(2)}[X]$.
The \emph{Pieri rule} \cite{Mac}
states that, for any partition $\mu$ and integer $r\ge0$,
\[
s_\mu[X]h_r[X]
=\sum_{\lambda/\mu\text{ horizontal }r\text{-strip}}s_\lambda[X],
\qquad
s_\mu[X]e_r[X]
=\sum_{\lambda/\mu\text{ vertical }r\text{-strip}}s_\lambda[X].
\]
Here a skew diagram $\lambda/\mu$ is a
\emph{horizontal $r$-strip} if it has $r$ cells and at most
one cell in each column, and a \emph{vertical $r$-strip}
if it has $r$ cells and at most one cell in each row.

A symmetric function is \emph{Schur positive} if its Schur coefficients are nonnegative integers. For $X=x_1+\cdots+x_k$, the polynomials $s_\lambda[X]$ with $\ell(\lambda)\le k$ are linearly independent, while $s_\lambda[X]=0$ for $\ell(\lambda)>k$; a symmetric polynomial in $x_1,\ldots,x_k$ is \emph{Schur positive} if it is a nonnegative integer combination of these $s_\lambda[X]$. Thus Schur positivity is preserved under specialization to a finite alphabet. For two letters and $0\le j\le\lfloor N/2\rfloor$, we have
\begin{equation}\label{eq:two-row}
s_{(N-j,j)}[q+t]=(qt)^j\,[N-2j+1]_{q,t},
\end{equation}
where $[k]_{q,t}:=q^{k-1}+q^{k-2}t+\cdots+t^{k-1}$ for $k\ge1$.

Give $\Z[e_1[X],e_2[X]]$ the grading $\deg e_1[X]=1$ and $\deg e_2[X]=2$. Since $q+t$ and $qt$ are algebraically independent, the map $F[X]\mapsto F[q+t]$ is injective on $\Z[e_1[X],e_2[X]]$. Consequently, identities in these two generators may be checked in two variables. An element $F[X]\in\Z[e_1[X],e_2[X]]$ is \emph{$e$-positive} if $F[X]\in\N[e_1[X],e_2[X]]$. Also, $h_2[X]=e_1[X]^2-e_2[X]$, and the Pieri rule shows that products of $e_1[X],e_2[X],h_2[X]$ are Schur positive.

Write $N=2M+\delta$, where $\delta\in\{0,1\}$. The monomials $e_1[X]^{N-2k}e_2[X]^k$, $0\le k\le M$, form a basis of the homogeneous polynomials of degree $N$ in $e_1[X]$ and $e_2[X]$. Hence the coordinate substitution $(e_1[X],e_2[X])=(1,y)$ identifies these homogeneous polynomials with the polynomials in $y$ of degree at most $M$:
\begin{equation}\label{eq:dehom}
F[X]=\sum_{k=0}^M\gamma_k\,e_1[X]^{N-2k}e_2[X]^k\qquad\Longleftrightarrow\qquad F(1,y)=\sum_{k=0}^M\gamma_ky^k.
\end{equation}
The inverse replaces $y^k$ by $e_1[X]^{N-2k}e_2[X]^k$. For example, \eqref{eq:U-explicit} gives
\[
U_n(1,y)=\sum_{k=0}^{\lfloor(n-1)/2\rfloor}\binom{n-1-k}{k}y^k.
\]
This is the $n$th Fibonacci polynomial; its value $F_n:=U_n(1,1)$ is the $n$th Fibonacci number.

Since $U_1(1,y)=1$ and $U_n(1,y)=1+(n-2)y+\cdots$ for $n\ge2$, the product formula \eqref{eq:Lrs} gives, for $r,s\ge1$,
\begin{equation}\label{eq:Lrs-lin}
L_{r,s}(1,0)=1,\qquad [y]\,L_{r,s}(1,y)=\sum_{i=1}^r(s+i-2)-\sum_{i=2}^r(i-2)=rs-1.
\end{equation}

For Gaussian polynomials we use the standard notation $(q;q)_n:=(1-q)(1-q^2)\cdots(1-q^n)$, $[n]_q:=(1-q^n)/(1-q)$ and
\[
\qbinom{n}{k}_q:=\frac{(q;q)_n}{(q;q)_k\,(q;q)_{n-k}}.
\]
We also use the homogenized Gaussian polynomial $\qbinom{n}{k}_{q,t}:=t^{k(n-k)}\qbinom{n}{k}_{q/t}$. Let $\tau$ be the ring automorphism of $\Z[e_1[X],e_2[X]]$ with $\tau(e_1[X])=e_1[X]$ and $\tau(e_2[X])=-e_2[X]$. We use the same notation for the automorphism of the ring of symmetric polynomials in $q,t$ that corresponds to $\tau$ under the isomorphism $F[X]\mapsto F[q+t]$. Then $\tau(U_{n+1}[X])=e_1[X]\tau(U_n[X])-e_2[X]\tau(U_{n-1}[X])$, which becomes the recurrence for $h_{n-1}[q+t]=[n]_{q,t}$ under $e_1[q+t]=q+t$ and $e_2[q+t]=qt$. The initial values agree, so
\begin{equation}\label{eq:tau}
\tau(U_n[q+t])=[n]_{q,t},\qquad
\tau(L_{r,s}[q+t])=\qbinom{r+s}{r}_{q,t}
=t^{rs}\qbinom{r+s}{r}_{q/t}.
\end{equation}
\subsection{Coefficient extraction}\label{sec:coords}

\begin{lemma}\label{lem:basis}
Let $F[X]\in\Z[e_1[X],e_2[X]]$ be homogeneous of degree $N=2M+\delta$, where $\delta\in\{0,1\}$. Define integers $c_j(F)$ by
\[
F(1,y)=\sum_{j=0}^Mc_j(F)\,y^j(1-y)^{M-j}.
\]
\begin{enumerate}
\item[(a)] $F[X]=\sum_{j=0}^Mc_j(F)\,e_1[X]^\delta h_2[X]^{M-j}e_2[X]^j$. In particular, these products form a basis of the homogeneous polynomials of degree $N$ in $e_1[X]$ and $e_2[X]$.
\item[(b)] $F[X]\in\N[e_1[X],e_2[X],h_2[X]]$ if and only if $c_j(F)\ge0$ for all $j$.
\item[(c)] If $F(1,y)=\sum_{k=0}^M\gamma_ky^k$, the coefficients $c_j(F)$ are given equivalently by
\begin{equation}\label{eq:transform}
\begin{gathered}
c_j(F)=\sum_{k=0}^j\binom{M-k}{j-k}\gamma_k\quad(0\le j\le M)
\textup{ and }\sum_{j=0}^Mc_j(F)\,w^j=(1+w)^MF\Bigl(1,\frac{w}{1+w}\Bigr).
\end{gathered}
\end{equation}
\end{enumerate}
\end{lemma}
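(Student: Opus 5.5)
The whole argument runs through the dehomogenization \eqref{eq:dehom}. The coordinate substitution $(e_1[X],e_2[X])=(1,y)$ is a linear isomorphism from the homogeneous polynomials of degree $N$ onto the polynomials in $y$ of degree at most $M$. So each statement about $F[X]$ becomes a statement about $F(1,y)$.

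For part (a), we first dehomogenize the products themselves. Since $h_2[X]=e_1[X]^2-e_2[X]$, the product $P_j:=e_1[X]^\delta h_2[X]^{M-j}e_2[X]^j$ is homogeneous of degree $\delta+2(M-j)+2j=N$. Its image under the substitution is $P_j(1,y)=y^j(1-y)^{M-j}$. These $M+1$ polynomials have lowest-order terms $y^j$, so they are triangular with respect to the monomial basis $1,y,\dots,y^M$ with unit diagonal. Hence they form a basis of the polynomials of degree at most $M$ over $\Z$; unimodularity of the triangular change of basis guarantees that the $c_j(F)$ are integers. Pulling back through \eqref{eq:dehom}, the $P_j$ form a basis of the degree-$N$ homogeneous part. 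The identity $F(1,y)=\sum_j c_j(F)P_j(1,y)$ then lifts to $F[X]=\sum_j c_j(F)P_j$, by injectivity of the substitution on homogeneous elements of degree $N$.

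For part (c), set $y=w/(1+w)$, so that $1-y=1/(1+w)$. Then $y^j(1-y)^{M-j}=w^j(1+w)^{-M}$. Multiplying the defining expansion by $(1+w)^M$ gives the generating-function form $\sum_j c_j(F)w^j=(1+w)^MF(1,w/(1+w))$. Substituting $F(1,y)=\sum_k\gamma_ky^k$ gives $(1+w)^MF(1,w/(1+w))=\sum_k\gamma_kw^k(1+w)^{M-k}$. Extracting $[w^j]$ yields $c_j(F)=\sum_{k\le j}\binom{M-k}{j-k}\gamma_k$.

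For part (b), the ``if'' direction is immediate from (a). For the ``only if'' direction, suppose $F\in\N[e_1[X],e_2[X],h_2[X]]$. Homogeneity lets us discard components of other degrees, so $F$ is an $\N$-combination of monomials $e_1[X]^ae_2[X]^bh_2[X]^c$ with $a+2b+2c=N$; note that $a\equiv\delta \pmod 2$. Write $a=\delta+2m$ and expand $e_1[X]^{2m}=(h_2[X]+e_2[X])^m$ by the binomial theorem, using $e_1[X]^2=h_2[X]+e_2[X]$. Each monomial then becomes an $\N$-combination of the basis elements $P_j$. By uniqueness of the coordinates in the basis from (a), every $c_j(F)$ is nonnegative.

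The only point needing care is this ``only if'' direction: one must rewrite even powers of $e_1[X]$ via $e_1^2=h_2+e_2$, so that an arbitrary $\N$-expression is visibly a nonnegative combination of the basis, and then appeal to uniqueness of coordinates.
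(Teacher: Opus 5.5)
Your proposal is correct and follows essentially the same route as the paper: in (a) both arguments dehomogenize $e_1[X]^\delta h_2[X]^{M-j}e_2[X]^j$ to $y^j(1-y)^{M-j}$ and use the unit-triangular structure. In (b) your rewriting $e_1[X]^2=h_2[X]+e_2[X]$ is exactly the paper's expansion $y^v(1-y)^u\bigl(y+(1-y)\bigr)^p$ read before dehomogenizing, and in (c) you derive the generating function first and extract coefficients, whereas the paper starts from the binomial expansion of $y^k$; the content is the same.
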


\begin{proof}
Since $h_2[X]=e_1[X]^2-e_2[X]$, the coordinate substitution $(e_1[X],e_2[X])=(1,y)$ sends the product $e_1[X]^\delta h_2[X]^{M-j}e_2[X]^j$ to $y^j(1-y)^{M-j}$. These polynomials have lowest terms $y^j$, so they form a $\Z$-basis of the polynomials of degree at most $M$. Part~(a) follows from \eqref{eq:dehom}. For part~(c), the binomial theorem gives
\[
y^k=y^k\bigl(y+(1-y)\bigr)^{M-k}=\sum_{j=k}^M\binom{M-k}{j-k}y^j(1-y)^{M-j}.
\]
Substituting $y=w/(1+w)$ gives $y^j(1-y)^{M-j}=w^j(1+w)^{-M}$, which proves the generating function. For part~(b), if every $c_j(F)\ge0$, part~(a) gives $F[X]\in\N[e_1[X],e_2[X],h_2[X]]$. Conversely, suppose that $F[X]\in\N[e_1[X],e_2[X],h_2[X]]$. Taking homogeneous components of degree $N$, we may write $F[X]$ as a nonnegative integer combination of monomials $e_1[X]^{\delta+2p}e_2[X]^vh_2[X]^u$ of degree $N$, so that $p+u+v=M$. The coordinate substitution of such a monomial is
\[
y^v(1-y)^u=y^v(1-y)^u\bigl(y+(1-y)\bigr)^p,
\]
which has nonnegative coefficients in the basis $y^j(1-y)^{M-j}$. Since each $c_j$ is linear, every $c_j(F)\ge0$.
\end{proof}

By Lemma~\ref{lem:basis}(a), $c_j(F)$ is the coefficient of $e_1[X]^\delta h_2[X]^{M-j}e_2[X]^j$ in $F[X]$.

Although \eqref{eq:transform} expresses the coefficients $c_j(F)$ in terms
of $F(1,y)$, working directly with $L_{r,s}(1,y)$ is inconvenient. The following substitution $y=z/(1-z)^2$ relates Lucas binomial coefficients to Gaussian binomial
coefficients at $q=-z$, whose product formula will be useful for the coefficient estimates below.

\begin{lemma}\label{lem:binet}
For $n\ge1$,
\begin{equation}\label{eq:binet}
U_n\Bigl(1,\frac{z}{(1-z)^2}\Bigr)=\frac{[n]_{-z}}{(1-z)^{n-1}},
\end{equation}
where $[n]_{-z}=(1-(-z)^n)/(1+z)$.
Consequently, for $r,s\ge0$,
\begin{equation}\label{eq:binet-L}
L_{r,s}\Bigl(1,\frac{z}{(1-z)^2}\Bigr)=\frac{1}{(1-z)^{rs}}\qbinom{r+s}{r}_{-z}.
\end{equation}
\end{lemma}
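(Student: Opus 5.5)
We prove \eqref{eq:binet} by induction on $n$, using the recurrence \eqref{eq:U} under the coordinate substitution $(e_1[X],e_2[X])=(1,y)$ with $y=z/(1-z)^2$. Put
\[
V_n:=\frac{[n]_{-z}}{(1-z)^{n-1}}.
\]
The base cases are direct: $V_1=1=U_1$, and $V_0=0$, which matches $U_0=0$ since $[0]_{-z}=0$. It therefore suffices to check that $V_n$ satisfies the same recurrence,
\[
V_{n+1}=V_n+\frac{z}{(1-z)^2}V_{n-1}.
\]

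Multiply this identity by $(1-z)^n(1+z)$ and write $[n]_{-z}(1+z)=1-(-z)^n$. The identity to check becomes
\[
1-(-z)^{n+1}=(1-z)\bigl(1-(-z)^n\bigr)+z\bigl(1-(-z)^{n-1}\bigr).
\]
The right side expands to
\[
1-z-(-z)^n+z(-z)^n+z-z(-z)^{n-1}.
\]
Now use $z(-z)^{n-1}=-(-z)^n$ and $z(-z)^n=-(-z)^{n+1}$. The terms $-(-z)^n$ and $+(-z)^n$ cancel, as do $-z$ and $+z$, leaving $1-(-z)^{n+1}$. This completes the induction.

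For \eqref{eq:binet-L}, note that the coordinate substitution is a ring homomorphism into rational functions in $z$. We apply it to the product formula \eqref{eq:Lrs},
\[
L_{r,s}=\prod_{i=1}^r\frac{U_{s+i}}{U_i},
\]
which is an identity in the fraction field. The substitution is legitimate because each $U_i(1,z/(1-z)^2)$ is a nonzero rational function: its value at $z=0$ is $1$. By \eqref{eq:binet}, the $i$th factor becomes
\[
\frac{[s+i]_{-z}}{[i]_{-z}}\,(1-z)^{-(s+i-1)+(i-1)}=\frac{[s+i]_{-z}}{[i]_{-z}}\,(1-z)^{-s}.
\]
Taking the product over $i=1,\dots,r$ gives $(1-z)^{-rs}$ times $\prod_{i=1}^r [s+i]_{-z}/[i]_{-z}$. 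This product is $\qbinom{r+s}{r}_{-z}$ by the standard product formula for Gaussian polynomials. The case $r=0$ is trivial, since both sides equal $1$.

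The main point requiring care is justifying the substitution inside the quotient, which the nonvanishing of each $U_i(1,z/(1-z)^2)$ handles. The rest is routine algebra.
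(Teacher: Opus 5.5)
Your proof is correct and follows the paper's strategy: show that the closed form satisfies the recurrence \eqref{eq:U} with the same initial values, then multiply the factors of \eqref{eq:Lrs}, whose exponents of $1-z$ sum to $rs$. The only difference is cosmetic: the paper gets the recurrence for free by writing the closed form as the Binet expression $(\xi_1^n-\xi_2^n)/(\xi_1-\xi_2)$ with $\xi_1=1/(1-z)$ and $\xi_2=-z/(1-z)$, whereas you check it by direct expansion.
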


\begin{proof}
Let $\xi_1=1/(1-z)$ and $\xi_2=-z/(1-z)$. Then
\[
\xi_1+\xi_2=1,\qquad \xi_1\xi_2=-\frac{z}{(1-z)^2}.
\]
The sequence $u_n:=(\xi_1^n-\xi_2^n)/(\xi_1-\xi_2)$ satisfies $u_0=0$, $u_1=1$ and $u_{n+1}=(\xi_1+\xi_2)u_n-\xi_1\xi_2u_{n-1}$. These are the recurrence and initial conditions of $U_n(1,z/(1-z)^2)$. Since $\xi_2=-z\xi_1$, we obtain
\[
U_n\Bigl(1,\frac{z}{(1-z)^2}\Bigr)=\frac{\xi_1^n-\xi_2^n}{\xi_1-\xi_2}=\xi_1^{n-1}\,\frac{1-(-z)^n}{1+z}=\frac{[n]_{-z}}{(1-z)^{n-1}}.
\]
Applying \eqref{eq:binet} to the factors in \eqref{eq:Lrs} gives \eqref{eq:binet-L}, since the exponent of $1-z$ in the denominator is
\[
\sum_{i=1}^r\bigl((s+i-1)-(i-1)\bigr)=rs.\qedhere
\]
\end{proof}

Lemma~\ref{lem:binet} is the case $(e_1[X],e_2[X])=(1,z/(1-z)^2)$ of the classical identity $U_n[X]=h_{n-1}[\theta_1+\theta_2]$, where $\theta_1$ and $\theta_2$ are the roots of $T^2-e_1[X]\,T-e_2[X]$; in the proof above, $\xi_1$ and $\xi_2$ are these roots.

To apply Lemma~\ref{lem:binet} to \eqref{eq:transform}, let $y=z/(1-z)^2$. Then $1-y=(1-3z+z^2)/(1-z)^2$, so $w=y/(1-y)$ satisfies
\begin{equation}\label{eq:w}
w=\frac{z}{1-3z+z^2},\qquad
1+w=\frac{(1-z)^2}{1-3z+z^2},\qquad
\frac{w}{1+w}=y.
\end{equation}
We use the change of variables in \eqref{eq:w} together with
\[
\frac{1}{1-3z+z^2}=\sum_{n\ge0}F_{2n+2}\,z^n
=1+3z+8z^2+21z^3+55z^4+\cdots,
\]
whose coefficients are Fibonacci numbers.

\begin{lemma}\label{lem:lagrange}
Let $z=z(w)$ be the compositional inverse of $w=z/(1-3z+z^2)$. For every power series $\Phi(z)$ and every $n\ge0$,
\begin{equation}\label{eq:lagrange}
[w^n]\,\Phi(z(w))
=[z^n]\,\Phi(z)(1-3z+z^2)^{n-1}(1-z^2).
\end{equation}
\end{lemma}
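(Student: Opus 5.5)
This is the Lagrange inversion formula in its derivative form, specialized to $w=z/\phi(z)^{-1}$; here $z=w\,\phi(z)$ with $\phi(z)=1/(1-3z+z^2)$. Since $\phi(0)=1\neq0$, the compositional inverse $z(w)=w+O(w^2)$ exists as a formal power series. We then have $w$-coefficient extraction of $\Phi(z(w))$ for arbitrary $\Phi$.

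We use the form of Lagrange inversion that avoids differentiating $\Phi$:
\[
[w^n]\,\Phi(z(w))=[z^n]\,\Phi(z)\,\phi(z)^{n}\bigl(1-z\phi'(z)/\phi(z)\bigr),
\]
valid for all $n\ge0$, including $n=0$. We can prove this directly by a residue computation. Write $[w^n]\Phi(z(w))=\operatorname{res}_w \Phi(z(w))w^{-n-1}$ and change variables $w=z/\phi(z)$, which is legitimate for formal Laurent series since $z\mapsto z/\phi(z)$ has order one. This gives
\[
\operatorname{res}_z \Phi(z)\,\phi(z)^{n+1}z^{-n-1}\,\frac{d}{dz}\Bigl(\frac{z}{\phi(z)}\Bigr).
\]
Expanding $\frac{d}{dz}(z/\phi)=\frac{1}{\phi}-\frac{z\phi'}{\phi^2}$ then yields the displayed formula.

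Now we substitute $\phi=(1-3z+z^2)^{-1}$. We get $\phi'/\phi=(3-2z)/(1-3z+z^2)$, so
\[
1-\frac{z\phi'}{\phi}=\frac{1-3z+z^2-3z+2z^2}{1-3z+z^2}=\frac{1-6z+3z^2}{1-3z+z^2}.
\]
This does not match the stated factor $1-z^2$, so I must recheck the derivative. Write $w=z\psi(z)$ with $\psi=(1-3z+z^2)^{-1}$. Then
\[
\frac{dw}{dz}=\frac{(1-3z+z^2)-z(-3+2z)}{(1-3z+z^2)^2}=\frac{1-z^2}{(1-3z+z^2)^2}.
\]
Redoing the residue change of variables with this derivative gives
\[
[w^n]\Phi=\operatorname{res}_z\,\Phi(z)\,\frac{(1-3z+z^2)^{n+1}}{z^{n+1}}\cdot\frac{1-z^2}{(1-3z+z^2)^2}=[z^n]\,\Phi(z)(1-3z+z^2)^{n-1}(1-z^2).
\]
This is exactly \eqref{eq:lagrange}. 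The earlier mismatch came from a sign slip: here $\phi$ is the reciprocal of $\psi$, so $z\phi'/\phi$ should be replaced by $-z\psi'/\psi$.

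The only delicate step is justifying the formal residue change-of-variables rule, $\operatorname{res}_w G(w)=\operatorname{res}_z G(w(z))\,w'(z)$ for $w(z)$ of order exactly one. This is standard. It can be verified by linearity on monomials $G=w^k$. For $k\ne-1$, the term $w^kw'=\frac{d}{dz}\bigl(w^{k+1}/(k+1)\bigr)$ is a derivative and so has zero residue. For $k=-1$, we have $w'/w=1/z+(\text{power series})$, which has residue $1$.

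For $n=0$, the exponent $n-1=-1$ produces $(1-3z+z^2)^{-1}$, which is still a power series. The formula then reduces to $\Phi(0)=[z^0]\Phi(z)(1-z^2)/(1-3z+z^2)$, which holds.
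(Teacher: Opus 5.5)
Your proof is correct, but it takes a different route from the paper. The paper avoids residues. It quotes Lagrange inversion in Stanley's power form $n[w^n]z(w)^k=k[z^{n-k}](1-3z+z^2)^n$ and reduces to $\Phi(z)=z^k$ by linearity. It then rewrites the right side of \eqref{eq:lagrange} as $\frac kn[z^{n-k}](1-3z+z^2)^n$, using $(1-3z+z^2)-z(-3+2z)=1-z^2$ and differentiation of $(1-3z+z^2)^n$. The cases $k=0$ and $n=0$ are checked separately.

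You instead prove the derivative-free form of Lagrange inversion directly from the formal substitution rule $\mathrm{res}_w\,G(w)=\mathrm{res}_z\,G(w(z))\,w'(z)$. The factor $1-z^2$ then appears simply as the numerator of $dw/dz$. This is the same identity the paper uses, but its origin is now visible. Your argument covers all $n\ge0$ at once, with no case split and no external citation. The paper's version needs no formal Laurent series or residues, at the cost of citing Stanley and doing the monomial bookkeeping.

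Two small points for a final write-up.
\begin{itemize}
\item When you extend the substitution rule from monomials $w^k$ to $G(w)=\Phi(z(w))\,w^{-n-1}$, which has infinitely many positive powers, say why this is allowed. Only the finitely many terms with $k\le-1$ can contribute, because $w(z)^k w'(z)$ is a power series for $k\ge0$.
\item Delete the false start. From $w=z/(1-3z+z^2)$ one gets $z=w\,(1-3z+z^2)$, so the correct choice is $\phi(z)=1-3z+z^2$, not its reciprocal. With that $\phi$, your general formula gives $\phi^{n}\bigl(1-z\phi'/\phi\bigr)=(1-3z+z^2)^{n-1}\bigl(\phi-z\phi'\bigr)=(1-3z+z^2)^{n-1}(1-z^2)$ at once. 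The display producing $1-6z+3z^2$ should not appear.
\end{itemize}
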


\begin{proof}
Since $1-3z+z^2$ has constant term $1$, the inverse $z(w)$ exists and satisfies $w=z(w)/(1-3z(w)+z(w)^2)$. Stanley \cite[Theorem~5.4.2]{EC2} gives, in the present notation,
\[
n[w^n]z(w)^k=k[z^{n-k}](1-3z+z^2)^n\qquad(n,k\ge1).
\]
Both sides of \eqref{eq:lagrange} are linear in $\Phi(z)$, and each coefficient depends on only finitely many terms. It therefore suffices to take $\Phi(z)=z^k$. Since $(1-3z+z^2)-z(-3+2z)=1-z^2$, for $n,k\ge1$ we have
\begin{align*}
[z^n]z^k(1-3z+z^2)^{n-1}(1-z^2)
&=[z^{n-k}](1-3z+z^2)^n
-[z^{n-k-1}](1-3z+z^2)^{n-1}(-3+2z)\\
&=[z^{n-k}](1-3z+z^2)^n
-\frac1n[z^{n-k-1}]\frac{d}{dz}(1-3z+z^2)^n\\
&=\frac kn[z^{n-k}](1-3z+z^2)^n=[w^n]z(w)^k.
\end{align*}
When $k=0$ and $n\ge1$, the same calculation gives zero on both sides. When $n=0$, both sides equal the constant term of $z^k$.
\end{proof}

\begin{proposition}\label{prop:kernel}
Let $F[X]\in\Z[e_1[X],e_2[X]]$ be homogeneous of degree $N=2M+\delta$, where $\delta\in\{0,1\}$, and suppose that
\begin{equation}\label{eq:kernel-hyp}
F\Bigl(1,\frac{z}{(1-z)^2}\Bigr)=\frac{G(z)}{(1-z)^N}
\end{equation}
for a power series $G(z)$. Then, for $0\le j\le M$,
\begin{equation}\label{eq:kernel}
c_j(F)=[z^j]\,\frac{(1-z^2)G(z)}{(1-z)^\delta(1-3z+z^2)^{M-j+1}}.
\end{equation}
In particular, if $G(z)=z^mG_0(z)$ with $0\le m\le M$ and $G_0$ a power series, then $c_j(F)=0$ for $j<m$ and $c_m(F)=G_0(0)$.
\end{proposition}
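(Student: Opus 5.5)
The plan is to combine the generating function in Lemma~\ref{lem:basis}(c) with the change of variables \eqref{eq:w} and then extract coefficients using Lemma~\ref{lem:lagrange}. By \eqref{eq:transform},
\[
\sum_{j=0}^M c_j(F)\,w^j=(1+w)^M F\Bigl(1,\frac{w}{1+w}\Bigr).
\]
Set $w=z/(1-3z+z^2)$, so that $z=z(w)$ is the compositional inverse from Lemma~\ref{lem:lagrange}. By \eqref{eq:w} we have $w/(1+w)=y=z/(1-z)^2$ and $1+w=(1-z)^2/(1-3z+z^2)$. Hypothesis \eqref{eq:kernel-hyp} then gives the identity of power series in $w$:
\[
\sum_{j=0}^M c_j(F)\,w^j=\frac{(1-z)^{2M}}{(1-3z+z^2)^M}\cdot\frac{G(z)}{(1-z)^{N}}
=\frac{G(z)}{(1-z)^\delta(1-3z+z^2)^M}\Bigg|_{z=z(w)},
\]
using $N-2M=\delta$. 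This composition is legitimate because $z(w)=O(w)$, and both sides are formal power series in $w$ whose left side is a polynomial of degree at most $M$.

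Next, apply Lemma~\ref{lem:lagrange} with $\Phi(z)=G(z)(1-z)^{-\delta}(1-3z+z^2)^{-M}$ and $n=j$:
\[
c_j(F)=[z^j]\,\frac{G(z)}{(1-z)^\delta(1-3z+z^2)^M}(1-3z+z^2)^{j-1}(1-z^2),
\]
which simplifies to \eqref{eq:kernel}. For $j=0$ the factor $(1-3z+z^2)^{-1}(1-z^2)$ has constant term $1$, so the formula reduces to $c_0(F)=G(0)$, consistent with Lemma~\ref{lem:lagrange} at $n=0$. No care is needed with negative exponents: $(1-3z+z^2)^{j-1-M}$ is an honest power series because its base has constant term $1$.

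For the final assertion, substitute $G=z^mG_0$. The kernel multiplying $G$ in \eqref{eq:kernel}, namely $(1-z^2)(1-z)^{-\delta}(1-3z+z^2)^{-(M-j+1)}$, is a power series with constant term $1$. Hence the coefficient of $z^j$ vanishes when $j<m$, and at $j=m$ it equals $G_0(0)$.

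The only delicate point is justifying the substitution $w\leftrightarrow z$ as an identity of formal power series, namely that the polynomial identity in $y$ from Lemma~\ref{lem:basis} survives composition with $y=w/(1+w)$ and then with $w=w(z)$. This follows because all substitutions involved have zero constant term, so composition is well defined and invertible.
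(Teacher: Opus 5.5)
Your proposal is correct and follows the paper's proof essentially step for step: you use the generating function from Lemma~\ref{lem:basis}(c), the change of variables \eqref{eq:w} to reach $\Phi(z)=G(z)(1-z)^{-\delta}(1-3z+z^2)^{-M}$, and Lemma~\ref{lem:lagrange} with $n=j$. Your explicit check of the final assertion, using that the kernel has constant term $1$, fills in a step the paper leaves implicit.
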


\begin{proof}
By Lemma~\ref{lem:basis}(c) and \eqref{eq:w}, after the substitution $w=z/(1-3z+z^2)$ we have
\[
\sum_{j=0}^Mc_j(F)\,w^j=(1+w)^MF(1,y)=\frac{(1-z)^{2M}}{(1-3z+z^2)^M}\cdot\frac{G(z)}{(1-z)^N}=\frac{G(z)}{(1-z)^\delta\,(1-3z+z^2)^M}.
\]
Let $\Phi(z)=G(z)(1-z)^{-\delta}(1-3z+z^2)^{-M}$. Substituting $z=z(w)$ gives $\sum_jc_j(F)\,w^j=\Phi(z(w))$. Apply Lemma~\ref{lem:lagrange} with $n=j$; combining the factor $(1-3z+z^2)^{j-1}$ of \eqref{eq:lagrange} with $(1-3z+z^2)^{-M}$ gives \eqref{eq:kernel}.
\end{proof}

\subsection{Coefficient estimates}\label{sec:criterion}

For the binomial differences considered below, the series $G(z)$ in \eqref{eq:kernel-hyp} has the form
\[
G(z)=\frac{z^mJ(z)}{(-z;-z)_b},
\]
where $(-z;-z)_b=\prod_{i=1}^b\bigl(1-(-z)^i\bigr)$, $J(z)\in\Z[z]$ and $J(0)=1$. We give a sufficient condition on the negative coefficients of $J(z)$ for the coefficients in \eqref{eq:kernel} to be nonnegative. All infinite products below are interpreted as formal power series; each coefficient involves only finitely many factors.

For a power series $f(z)=\sum_nf_nz^n$ with real coefficients, we write
\[
f^+(z):=\sum_{f_n>0}f_nz^n,\qquad f^-(z):=\sum_{f_n<0}|f_n|\,z^n.
\]
Thus $f(z)=f^+(z)-f^-(z)$. Write $f(z)\succeq g(z)$ if $f(z)-g(z)$ has nonnegative coefficients. For a polynomial $J(z)=\sum_nj_nz^n$, the quantity used below is $J^-(1/2)=\sum_{j_n<0}|j_n|2^{-n}$.

To control the negative coefficients of $J(z)$, we use the series
\[
V(z):=\frac{1-z^2}{1-3z+z^2}\prod_{i\ge1,\ i\text{ odd}}(1-z^i)=\sum_{n\ge0}v_nz^n,
\]
so that $(v_0,v_1,v_2,\ldots)=(1,2,4,10,27,71,186,\ldots)$.
We prove $v_n\ge2v_{n-1}$ for $n\ge1$ using the factorization of the Lyndon word.

Consider a totally ordered alphabet whose letters have positive integer weights, with only finitely many letters of each weight. The weight of a word is the sum of its letter weights. A nonempty word is a \textit{Lyndon word} if it is smaller in lexicographic order than each of its nonempty proper suffixes. For example, when $x<y$, the words $x$, $xy$ and $xxy$ are Lyndon, but $xx$ and $yx$ are not. Every word has a unique factorization $w_1w_2\cdots w_k$ into Lyndon words satisfying $w_1\ge\cdots\ge w_k$. This is the theorem of Chen, Fox and Lyndon \cite{CFL}; see also Lothaire \cite{Lot}.

\begin{lemma}\label{lem:lyndon}
Let $u(z)=\sum_{n\ge1}u_nz^n$ have nonnegative integer coefficients. Then there are integers $\lambda_n\ge u_n$ such that
\[
\frac{1}{1-u(z)}=\prod_{n\ge1}(1-z^n)^{-\lambda_n}.
\]
\end{lemma}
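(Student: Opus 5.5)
The plan is to realize $1/(1-u(z))$ as the generating function of words, with the weight of a word recorded by the power of $z$, and then to apply the Chen--Fox--Lyndon factorization. Concretely, I take an alphabet having exactly $u_n$ letters of weight $n$ for each $n\ge1$, and I fix an arbitrary total order on the letters. There are finitely many letters of each weight and all weights are positive, so for each $N$ there are only finitely many words of weight $N$. A word is a finite sequence of letters, so the generating function of all words by weight is $\sum_{k\ge0}u(z)^k=1/(1-u(z))$; this is well defined as a formal power series because $u(0)=0$.

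Next I apply the theorem of Chen, Fox and Lyndon. Every word factors uniquely as $w_1w_2\cdots w_k$ with each $w_i$ Lyndon and $w_1\ge\cdots\ge w_k$. Conversely, every weakly decreasing finite sequence of Lyndon words concatenates to a word, and uniqueness of the factorization makes this a bijection. Hence words correspond bijectively to finite multisets of Lyndon words, and the weight of a word is the sum of the weights of its factors. Let $\lambda_n$ be the number of Lyndon words of weight $n$. This is finite, since Lyndon words of weight $n$ are among the finitely many words of weight $n$. Counting multisets gives
\[
\frac{1}{1-u(z)}=\prod_{n\ge1}(1-z^n)^{-\lambda_n},
\]
where each coefficient of the product involves only finitely many factors.

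It remains to show $\lambda_n\ge u_n$. Every single-letter word is Lyndon, because it has no nonempty proper suffix and so the condition holds vacuously. The $u_n$ letters of weight $n$ are therefore $u_n$ distinct Lyndon words of weight $n$, which gives $\lambda_n\ge u_n$.

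I do not expect a real obstacle. The only point that needs care is the formal bookkeeping: the finiteness conditions make both the word generating function and the infinite product well defined as formal power series, and the multiset count for each weight class is the standard factor $(1-z^n)^{-\lambda_n}$.
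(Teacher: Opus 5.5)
Your proposal is correct and follows the paper's proof essentially step for step. You realize $1/(1-u(z))$ as the weight generating function of words over an alphabet with $u_n$ letters of weight $n$, use the Chen--Fox--Lyndon factorization to biject words with finite multisets of Lyndon words so that $\lambda_n$ counts Lyndon words of weight $n$, and obtain $\lambda_n\ge u_n$ because single letters are Lyndon.
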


\begin{proof}
Choose a totally ordered set of letters with $u_n$ letters of weight $n$. Since there are finitely many letters of each weight, there are finitely many words of each weight, so the generating functions below are well defined. The generating function of all words by weight is $(1-u(z))^{-1}$. The factorization gives a weight-preserving bijection between words and finite multisets of Lyndon words. Thus, if $\lambda_n$ is the number of Lyndon words of weight $n$, the generating function of all words is also $\prod_n(1-z^n)^{-\lambda_n}$. Each single letter is a Lyndon word, so $\lambda_n\ge u_n$.
\end{proof}

\begin{lemma}\label{lem:V}
The series $(1-2z)V(z)$ has nonnegative integer coefficients. Hence $v_n\ge2v_{n-1}$ for $n\ge1$. Since $v_0=1$, every $v_n$ is positive, and $v_{n-k}\le2^{-k}v_n$ for $0\le k\le n$.
\end{lemma}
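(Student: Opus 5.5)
The plan is to realize the rational prefactor of $V(z)$, after multiplying by $1-2z$, as the generating function $1/(1-u(z))$ of words in the sense of Lemma~\ref{lem:lyndon}. If moreover $u$ has at least one letter of each odd weight, the factor $\prod_{i\text{ odd}}(1-z^i)$ will be absorbed by the Lyndon product. Concretely, I would define $u(z)$ by
\[
1-u(z)=\frac{1-3z+z^2}{(1-2z)(1-z^2)},
\]
so that $(1-2z)V(z)=\frac{1}{1-u(z)}\prod_{i\text{ odd}}(1-z^i)$.

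The first step is to compute $u$ explicitly. The numerator of $1-(1-u)$ over the common denominator is $(1-2z)(1-z^2)-(1-3z+z^2)=z-2z^2+2z^3$. Using $(1-2z+2z^2)/(1-2z)=1+2z^2/(1-2z)$, this gives
\[
u(z)=\frac{z}{1-z^2}+\frac{2z^3}{(1-2z)(1-z^2)}.
\]
Both summands have nonnegative integer coefficients and no constant term. The first summand alone contributes coefficient $1$ at every odd power, so $u_n\ge1$ for all odd $n$. This is the step I expect to be the only real obstacle, namely finding a decomposition in which $u$ visibly dominates the odd-indicator series; the computation above appears to settle it.

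Next I apply Lemma~\ref{lem:lyndon}, which gives integers $\lambda_n\ge u_n$ with $1/(1-u(z))=\prod_n(1-z^n)^{-\lambda_n}$. Hence
\[
(1-2z)V(z)=\prod_{n\ge1}(1-z^n)^{-\mu_n},
\]
where $\mu_n=\lambda_n-1\ge0$ for odd $n$ and $\mu_n=\lambda_n\ge0$ for even $n$. Each factor $(1-z^n)^{-\mu_n}$ with $\mu_n\ge0$ has nonnegative integer coefficients, and so does the product, since each coefficient involves only finitely many factors. Therefore $(1-2z)V(z)$ has nonnegative integer coefficients.

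Finally, the coefficient of $z^n$ in $(1-2z)V(z)$ is $v_n-2v_{n-1}$ for $n\ge1$, so $v_n\ge2v_{n-1}$. Since $v_0=1$, induction gives $v_n\ge2^n>0$. Iterating the inequality $k$ times gives $v_n\ge2^kv_{n-k}$, that is, $v_{n-k}\le2^{-k}v_n$ for $0\le k\le n$.
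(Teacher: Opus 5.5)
Your proof is correct and follows the paper's strategy: apply Lemma~\ref{lem:lyndon} to $(1-2z)$ times the rational part of $V(z)$ to get a product $\prod_n(1-z^n)^{-\lambda_n}$, then absorb the factors $1-z^i$. The difference is only in the choice of $u$. The paper uses $(1-2z)T(z)$ with $T(z)=(1-z)/(1-3z+z^2)$ and $u(z)=z^2/((1-z)(1-2z))$, so $\lambda_n\ge1$ for all $n\ge2$ absorbs $1-z^2$ and the odd factors with $i\ge3$. You instead keep $1-z^2$ in the rational part, and your $u$ has $u_n\ge1$ at every odd $n$, which absorbs all odd factors including $1-z$.
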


\begin{proof}
Let $T(z):=(1-z)/(1-3z+z^2)=\sum_{n\ge0}F_{2n+1}z^n=1+2z+5z^2+13z^3+\cdots$. Since
\[
1-\frac{z^2}{(1-z)(1-2z)}=\frac{(1-z)(1-2z)-z^2}{(1-z)(1-2z)}=\frac{1-3z+z^2}{(1-z)(1-2z)},
\]
we have
\[
(1-2z)\,T(z)=\frac{1}{1-u(z)},\qquad u(z):=\frac{z^2}{(1-z)(1-2z)}=\sum_{n\ge2}(2^{n-1}-1)z^n.
\]

Since $u_1=0$, comparing coefficients of $z$ gives $\lambda_1=0$. By Lemma~\ref{lem:lyndon}, 
\[
(1-2z)\,T(z)=\prod_{n\ge2}(1-z^n)^{-\lambda_n},\qquad \lambda_n\ge2^{n-1}-1\ge1.
\]
Since $(1-z^2)/(1-3z+z^2)=(1+z)T(z)$ and $(1+z)(1-z)=1-z^2$, we obtain
\[
V(z)=T(z)\,(1-z^2)\prod_{i\ge3,\ i\text{ odd}}(1-z^i).
\]
Therefore
\[
(1-2z)\,V(z)=\prod_{n\ge2}(1-z^n)^{-\lambda_n}\cdot(1-z^2)\prod_{i\ge3,\ i\text{ odd}}(1-z^i)=\prod_{n\ge2}(1-z^n)^{-\eta_n},
\]
where $\eta_n=\lambda_n-1$ if $n=2$ or $n$ is odd, and $\eta_n=\lambda_n$ otherwise. Since every $\eta_n$ is nonnegative, this product has nonnegative integer coefficients. The lemma follows, because $(1-2z)V(z)=v_0+\sum_{n\ge1}(v_n-2v_{n-1})z^n$.
\end{proof}

The estimate $v_{n-k}\le2^{-k}v_n$ gives the following positivity result.

\begin{lemma}\label{lem:absorb}
Let $J(z)\in\Z[z]$ with $J(0)=1$ and $J^-(1/2)<1$. Then for every $b\ge1$ all coefficients of
\[
W(z):=\frac{(1-z^2)J(z)}{(1-3z+z^2)(-z;-z)_b}
\]
are positive integers. In particular, $W(z)\succeq1/(1-z)$.
\end{lemma}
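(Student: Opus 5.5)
The plan is to separate the factor $J(z)$ from the rest and show that the remaining series
\[
K(z):=\frac{1-z^2}{(1-3z+z^2)(-z;-z)_b}=\sum_{n\ge0}k_nz^n
\]
grows at least as fast as $2^n$ in the sense of Lemma~\ref{lem:V}, namely $k_n\ge2k_{n-1}$. The negative part of $J$ is then absorbed using the hypothesis $J^-(1/2)<1$.

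The first step is to compare $K(z)$ with $V(z)$. Split $(-z;-z)_b=\prod_{i\le b}(1-(-z)^i)$ into odd and even $i$. For even $i$ the factor is $1-z^i$. For odd $i$ it is $1+z^i$, and I will write
\[
\frac{1}{1+z^i}=\frac{1-z^i}{1-z^{2i}}.
\]
This gives
\[
K(z)=\frac{1-z^2}{1-3z+z^2}\prod_{\substack{i\le b\\ i\text{ odd}}}(1-z^i)\cdot\prod_{\substack{i\le b\\ i\text{ odd}}}\frac{1}{1-z^{2i}}\cdot\prod_{\substack{i\le b\\ i\text{ even}}}\frac{1}{1-z^i}.
\]
Multiplying and dividing by $\prod_{i>b,\ i\text{ odd}}(1-z^i)$ turns the first part into $V(z)$, so that
\[
K(z)=V(z)\,Q(z),
\]
where $Q(z)$ is a product of factors $(1-z^m)^{-1}$. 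Hence $Q$ has nonnegative integer coefficients and $Q(0)=1$.

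The second step uses Lemma~\ref{lem:V}. That lemma says $(1-2z)V(z)$ has nonnegative coefficients. Since $Q(z)$ also has nonnegative coefficients, so does $(1-2z)K(z)=(1-2z)V(z)\cdot Q(z)$. Therefore $k_n\ge2k_{n-1}$ for all $n\ge1$, with $k_0=1$. Consequently every $k_n$ is a positive integer, and $k_{n-m}\le2^{-m}k_n$ for $0\le m\le n$.

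The third step is the absorption estimate. Write $J(z)=\sum_m j_mz^m$ with $j_0=1$, so that $[z^n]W=\sum_{m\le n}j_mk_{n-m}$. The term with $m=0$ contributes $k_n$, and the other terms with $j_m>0$ contribute nonnegatively. The terms with $j_m<0$ contribute at least
\[
-\sum_{j_m<0}|j_m|\,k_{n-m}\;\ge\;-k_n\sum_{j_m<0}|j_m|\,2^{-m}\;=\;-J^-(1/2)\,k_n.
\]
Hence
\[
[z^n]W\ge\bigl(1-J^-(1/2)\bigr)k_n>0.
\]
This coefficient is an integer, being a product of integer series, so it is at least $1$. That gives $W(z)\succeq1/(1-z)$.

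The only delicate point is the first step: rewriting the odd factors so that exactly the product in $V$ appears, with everything left over having nonnegative coefficients. Once $K=VQ$ is in place, the remaining steps are routine.
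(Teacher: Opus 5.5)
Your proof is correct and follows essentially the paper's route: the same factorization $K=VQ$ via $(1+z^i)^{-1}=(1-z^i)/(1-z^{2i})$, and the same absorption of $J^-(1/2)$ using the doubling estimate of Lemma~\ref{lem:V}. The only difference is the order of the two steps. The paper first shows that $V(z)J(z)$ has positive coefficients and then multiplies by $Q$, whereas you first transfer $k_n\ge2k_{n-1}$ to $K=VQ$ and then absorb $J$ against $K$; both routes give the same bound $[z^n]W\ge(1-J^-(1/2))k_n$.
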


\begin{proof}
Write $J(z)=\sum_kj_kz^k$. By Lemma~\ref{lem:V},
\[
[z^n]\,V(z)J(z)\ge v_n-\sum_{\substack{1\le k\le n\\ j_k<0}}|j_k|\,v_{n-k}\ge v_n\bigl(1-J^-(1/2)\bigr)>0.
\]
For odd $i$, the identity $(1+z^i)^{-1}=(1-z^i)/(1-z^{2i})$ gives
\[
\frac{1-z^2}{(1-3z+z^2)(-z;-z)_b}=V(z)\prod_{\substack{i>b\\ i\text{ odd}}}\frac{1}{1-z^i}\prod_{\substack{i\le b\\ i\text{ odd}}}\frac{1}{1-z^{2i}}\prod_{\substack{i\le b\\ i\text{ even}}}\frac{1}{1-z^i}.
\]
Each of the three products has nonnegative integer coefficients and constant term $1$. Multiplying them by $V(z)J(z)$ therefore preserves strict positivity of every coefficient.
\end{proof}

\begin{corollary}\label{cor:criterion}
Let $F[X]\in\Z[e_1[X],e_2[X]]$ be homogeneous of degree $N=2M+\delta$, where $\delta\in\{0,1\}$, and suppose that
\begin{equation}\label{eq:criterion-hyp}
F\Bigl(1,\frac{z}{(1-z)^2}\Bigr)=\frac{z^mJ(z)}{(1-z)^N(-z;-z)_b}
\end{equation}
with $0\le m\le M$, $b\ge1$, $J(z)\in\Z[z]$, $J(0)=1$ and $J^-(1/2)<1$. Then $c_j(F)=0$ for $j<m$, $c_m(F)=1$, and
\[
c_j(F)\ge\binom{M-m}{j-m}\qquad(m\le j\le M).
\]
In particular, $F[X]-e_1[X]^{N-2m}e_2[X]^m\in\N[e_1[X],e_2[X],h_2[X]]$.
\end{corollary}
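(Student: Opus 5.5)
We combine Proposition~\ref{prop:kernel} with Lemma~\ref{lem:absorb}. The hypothesis \eqref{eq:criterion-hyp} has the form \eqref{eq:kernel-hyp} with $G(z)=z^mG_0(z)$ and $G_0(z)=J(z)/(-z;-z)_b$. Since $J(0)=1$ and $(-z;-z)_b$ has constant term $1$, we get $G_0(0)=1$. The ``in particular'' part of Proposition~\ref{prop:kernel} then gives at once $c_j(F)=0$ for $j<m$ and $c_m(F)=1$.

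For $m\le j\le M$, formula \eqref{eq:kernel} gives
\[
c_j(F)=[z^{j-m}]\,\frac{(1-z^2)J(z)}{(1-3z+z^2)(-z;-z)_b}\cdot\frac{1}{(1-z)^\delta(1-3z+z^2)^{M-j}}
=[z^{j-m}]\,W(z)\,\frac{1}{(1-z)^\delta(1-3z+z^2)^{M-j}},
\]
where $W(z)$ is the series of Lemma~\ref{lem:absorb}. That lemma gives $W(z)\succeq 1/(1-z)$. The other factor has nonnegative coefficients: $1/(1-3z+z^2)=\sum F_{2n+2}z^n$, and $1/(1-z)^\delta$ also has nonnegative coefficients. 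Multiplying a $\succeq$ inequality by a series with nonnegative coefficients preserves it, so
\[
c_j(F)\ge[z^{j-m}]\,\frac{1}{(1-z)^{1+\delta}(1-3z+z^2)^{M-j}}.
\]
Next we use the coefficientwise bounds $1/(1-3z+z^2)\succeq 1/(1-z)$ (since $F_{2n+2}\ge1$) and $1/(1-z)^\delta\succeq1$. These give
\[
c_j(F)\ge[z^{j-m}](1-z)^{-(M-j+1)}=\binom{M-m}{j-m}.
\]
The last equality holds because $[z^k](1-z)^{-r}=\binom{k+r-1}{k}$, and here $k=j-m$, $r=M-j+1$, so $k+r-1=M-m$. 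This proves the stated inequality.

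For the final claim, apply Lemma~\ref{lem:basis}(a) to $F[X]$. The element $e_1[X]^{N-2m}e_2[X]^m$ is homogeneous of degree $N$, and its coordinate substitution is $y^m=\sum_{j\ge m}\binom{M-m}{j-m}y^j(1-y)^{M-j}$, by the expansion in the proof of Lemma~\ref{lem:basis}(c). Hence $c_j(e_1^{N-2m}e_2^m)=\binom{M-m}{j-m}$. Since each $c_j$ is linear,
\[
c_j\bigl(F-e_1^{N-2m}e_2^m\bigr)=c_j(F)-\binom{M-m}{j-m}\ge0
\]
for all $j$ (for $j<m$ both terms vanish). Lemma~\ref{lem:basis}(b) then gives membership in $\N[e_1[X],e_2[X],h_2[X]]$.

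The only genuinely nontrivial input is Lemma~\ref{lem:absorb}, which we take as given. The one point to handle carefully is the index shift by $m$ in the coefficient extraction, together with the edge case $j=M$, where the factor $(1-3z+z^2)^{0}$ is trivial and the bound reduces to $\binom{M-m}{M-m}=1$.
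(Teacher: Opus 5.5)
Your proof is correct and follows the paper's argument essentially step for step. Both use the shifted coefficient formula from Proposition~\ref{prop:kernel}, the bound $W(z)\succeq1/(1-z)$ from Lemma~\ref{lem:absorb}, and the comparisons $1/(1-3z+z^2)\succeq1/(1-z)$ and $(1-z)^{-\delta}\succeq1$, and both finish with Lemma~\ref{lem:basis}(b) after subtracting $e_1[X]^{N-2m}e_2[X]^m$. The only cosmetic difference is that you read off $c_j(F)=0$ for $j<m$ and $c_m(F)=1$ from the ``in particular'' clause of Proposition~\ref{prop:kernel}, while the paper reads them from $W(0)=1$ in the shifted formula.
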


\begin{proof}
By Proposition~\ref{prop:kernel} with $G(z)=z^mJ(z)/(-z;-z)_b$,
\[
c_j(F)=[z^{j-m}]\,\frac{W(z)}{(1-z)^\delta\,(1-3z+z^2)^{M-j}}.
\]
Here $W(z)$ is the series in Lemma~\ref{lem:absorb}. Its constant term is $1$, so $c_j(F)=0$ for $j<m$ and $c_m(F)=1$. Moreover, $W(z)\succeq1/(1-z)$, $1/(1-3z+z^2)\succeq1/(1-z)$ and $(1-z)^{-\delta}\succeq1$. All these series have nonnegative coefficients. Thus
\[
c_j(F)\ge[z^{j-m}]\,\frac{1}{(1-z)^{M-j+1}}=\binom{M-m}{j-m}.
\]
By \eqref{eq:transform}, the coefficient of $e_1[X]^\delta h_2[X]^{M-j}e_2[X]^j$ in $e_1[X]^{N-2m}e_2[X]^m$ is $\binom{M-m}{j-m}$. Subtracting this polynomial from $F[X]$ and applying Lemma~\ref{lem:basis}(b) proves the last assertion.
\end{proof}

To apply Corollary~\ref{cor:criterion}, we need estimates for products of factors $1-(-z)^i$. For a finite set $I$ of positive integers, define the product $\Pi_I(z)$ and the odd and even weighted sums of $I$ by
\[
\Pi_I(z):=\prod_{i\in I}\bigl(1-(-z)^i\bigr),\qquad
w_{\mathrm{odd}}(I):=\sum_{\substack{i\in I\\i\text{ odd}}}2^{-i},\qquad
w_{\mathrm{even}}(I):=\sum_{\substack{i\in I\\i\text{ even}}}2^{-i}.
\]
The factor $1-(-z)^i$ is $1+z^i$ for odd $i$ and $1-z^i$ for even $i$. Expanding over subsets of $I$ gives
\begin{equation}\label{eq:subsets}
\Pi_I(z)=\sum_{S\subseteq I}(-1)^{\#\{i\in S:\ i\text{ even}\}}\,z^{\sum_{i\in S}i}.
\end{equation}
We next bound $(\Pi_I(z))^-(1/2)$ and $(\Pi_I(z)-1)^+(1/2)$ in terms of $w_{\mathrm{odd}}(I)$ and $w_{\mathrm{even}}(I)$.
\begin{lemma}\label{lem:subset}
Let $I$ be a finite set of positive integers with $w_{\mathrm{odd}}(I)<1$ and $w_{\mathrm{even}}(I)<1$. Then
\begin{align*}
(\Pi_I(z))^-(1/2)\le\frac{w_{\mathrm{even}}(I)}{(1-w_{\mathrm{odd}}(I))(1-w_{\mathrm{even}}(I))}, (\Pi_I(z)-1)^+(1/2)\le\frac{w_{\mathrm{odd}}(I)+w_{\mathrm{even}}(I)^2}{(1-w_{\mathrm{odd}}(I))(1-w_{\mathrm{even}}(I))}.
\end{align*}
\end{lemma}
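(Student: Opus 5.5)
We work from the subset expansion \eqref{eq:subsets}. Split $I=O\sqcup E$ into its odd and even elements. Then
\[
\Pi_I(z)=\Pi_O(z)\,\Pi_E(z),\qquad \Pi_O(z)=\sum_{A\subseteq O}z^{\sum A},\qquad \Pi_E(z)=\sum_{B\subseteq E}(-1)^{|B|}z^{\sum B}.
\]
We take nonnegative parts to be subadditive and submultiplicative in the sense $(fg)^\pm\preceq f^+g^\pm+f^-g^\mp$, and $(f+g)^\pm\preceq f^\pm+g^\pm$. Evaluating at $z=1/2$ turns these relations into numerical inequalities. The negative part of $\Pi_I$ can only come from terms indexed by pairs $(A,B)$ with $|B|$ odd. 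The extra positive part, after the constant term is removed, comes from pairs with $|B|$ even and $(A,B)\neq(\emptyset,\emptyset)$. The goal is to dominate each of these sums by an absolute-value sum over subsets and then bound that sum geometrically.

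For the first bound, we have
\[
(\Pi_I)^-(1/2)\le\sum_{A\subseteq O}2^{-\sum A}\sum_{\substack{B\subseteq E\\|B|\text{ odd}}}2^{-\sum B}.
\]
This holds because cancellation only decreases $f^-$ at a nonnegative point. Next we bound the two factors. The first is
\[
\sum_{A\subseteq O}2^{-\sum A}=\prod_{i\in O}(1+2^{-i})\le\sum_{k\ge0}w_{\mathrm{odd}}^k=\frac{1}{1-w_{\mathrm{odd}}}.
\]
To see this, expand the product as a sum over $k$-subsets and note that each $k$-subset sum is at most $w_{\mathrm{odd}}^k/k!\le w_{\mathrm{odd}}^k$. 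Similarly, the sum over odd-size $B$ is at most $\sum_{k\text{ odd}}w_{\mathrm{even}}^k\le w_{\mathrm{even}}/(1-w_{\mathrm{even}})$. Multiplying the two bounds gives the first inequality.

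For the second bound, the positive part of $\Pi_I-1$ is dominated, in the same way, by the sum over pairs with $|B|$ even and $(A,B)\ne(\emptyset,\emptyset)$. We split this into two kinds of pairs. The pairs with $A\neq\emptyset$ contribute at most
\[
\Bigl(\frac{1}{1-w_{\mathrm{odd}}}-1\Bigr)\sum_{k\text{ even}}w_{\mathrm{even}}^k\le\frac{w_{\mathrm{odd}}}{(1-w_{\mathrm{odd}})(1-w_{\mathrm{even}})},
\]
since $\sum_{k \text{ even}}w_{\mathrm{even}}^k\le 1/(1-w_{\mathrm{even}})$. The pairs with $A=\emptyset$ and $|B|\ge2$ even contribute at most
\[
\sum_{k\ge2\text{ even}}w_{\mathrm{even}}^k\le\frac{w_{\mathrm{even}}^2}{1-w_{\mathrm{even}}}\le\frac{w_{\mathrm{even}}^2}{(1-w_{\mathrm{odd}})(1-w_{\mathrm{even}})}.
\]
Adding the two contributions gives the stated bound.

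The main point needing care is the first reduction step. We must justify that $f^\pm(1/2)$ is at most the sum of $2^{-\text{exponent}}$ over the expansion terms of the corresponding sign. This holds because each coefficient $f_n$ is a signed sum of expansion terms: if $f_n<0$, then $|f_n|$ is at most the number of negative terms at $z^n$, and similarly for positive $f_n$. The constant term of $\Pi_I$ is exactly $1$, coming only from the empty pair, so subtracting $1$ removes precisely that pair. The remaining estimates are elementary bounds on elementary symmetric sums.
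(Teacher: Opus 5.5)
Your proposal is correct and follows essentially the paper's own argument. You bound the positive and negative parts by the like-signed terms of the subset expansion (cancellation only helps), split $I$ into odd and even indices, and bound the resulting elementary symmetric sums geometrically by powers of $w_{\mathrm{odd}}$ and $w_{\mathrm{even}}$. Your parity restrictions on $|B|$ and the estimate $e_k\le w^k/k!$ are slightly sharper intermediate bounds, which you then relax to exactly the same final inequalities.
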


\begin{proof}
Write $x=w_{\mathrm{odd}}(I)$ and $y=w_{\mathrm{even}}(I)$. For $h,k\ge0$, let $a_h$ and $b_k$ be the sums of $2^{-\sum_{i\in S}i}$ over the $h$-element subsets of the odd indices in $I$ and the $k$-element subsets of the even indices in $I$, respectively. Then $a_0=b_0=1$, $a_h\le x^h$, and $b_k\le y^k$, since each product $2^{-\sum_{i\in S}i}$ occurs among the nonnegative terms in the expansion of $x^h$ or $y^k$.

By \eqref{eq:subsets}, a subset $S$ gives a negative term only if $S$ contains an odd number of even indices, hence at least one. Collecting terms of equal degree can only decrease the total weight of the negative terms. Therefore
\[
(\Pi_I(z))^-(1/2)
\le\sum_{h\ge0}a_h\sum_{k\ge1}b_k
\le\frac{y}{(1-x)(1-y)}.
\]
A nonempty subset giving a positive term either contains at least one odd index, or contains no odd indices and at least two even indices. These two cases are disjoint, and collecting terms can again only decrease the total weight of the positive terms. Thus
\[
(\Pi_I(z)-1)^+(1/2)\le\sum_{h\ge1}a_h\sum_{k\ge0}b_k+\sum_{k\ge2}b_k\le\frac{x}{(1-x)(1-y)}+\frac{y^2}{1-y}\le\frac{x+y^2}{(1-x)(1-y)}.\qedhere
\]
\end{proof}

\section{Lucas binomial differences}\label{sec:equal}

In Subsection~\ref{sec:equal-proof} we use the coefficient formula of Subsection~\ref{sec:coords} and the estimate of Subsection~\ref{sec:criterion} to prove Theorem~\ref{thm:equal}; Corollary~\ref{cor:bergeron} then follows by symmetry. The case $a=1$ uses Lemma~\ref{lem:KOH}. In Subsection~\ref{sec:unequal} we compare both Lucas binomial coefficients with the one-row coefficients $L_{1,n}[X]=U_{n+1}[X]$ and apply the same coefficient estimate to prove Theorem~\ref{thm:unequal}.

\subsection{Proof of Theorem~\ref{thm:equal}}\label{sec:equal-proof}

Fix $a<b\le c\le d$ with $ad=bc$, and write $ad=2M+\delta$, where $\delta\in\{0,1\}$. We use Corollary~\ref{cor:criterion} when $a\ge2$, and Lemma~\ref{lem:KOH} when $a=1$.

Let $I_1:=\{c+1,\ldots,c+b\}$ and $I_2:=\{a+1,\ldots,b\}\cup\{d+1,\ldots,d+a\}$. Since $d+1>b$, the intervals defining $I_2$ are disjoint. Writing the two Gaussian polynomials with the common denominator $(q;q)_b$ gives
\[
\qbinom{b+c}{b}_q-\qbinom{a+d}{a}_q
=\frac{\prod_{i=1}^b(1-q^{c+i})-\prod_{i=a+1}^b(1-q^i)\prod_{i=1}^a(1-q^{d+i})}{(q;q)_b}.
\]
Here the first product in the numerator is $1+O(q^{a+2})$, whereas the second is $1-q^{a+1}+O(q^{a+2})$. Thus the numerator is $q^{a+1}$ times a polynomial with constant term $1$. At $q=-z$, the numerator becomes $\Pi_{I_1}(z)-\Pi_{I_2}(z)$. Hence
\[
H(z):=(-1)^{a+1}z^{-(a+1)}\bigl(\Pi_{I_1}(z)-\Pi_{I_2}(z)\bigr)
\]
is a polynomial with $H(0)=1$.
Let $F[X]:=(-1)^{a+1}(L_{b,c}[X]-L_{a,d}[X])$. Lemma~\ref{lem:binet} now gives
\begin{equation}\label{eq:H}
F\Bigl(1,\frac{z}{(1-z)^2}\Bigr)=\frac{z^{a+1}H(z)}{(1-z)^{ad}(-z;-z)_b}.
\end{equation}
Thus $F[X]$ has the form \eqref{eq:criterion-hyp} with $N=ad$, $m=a+1$ and $J(z)=H(z)$. The assumptions $a<b\le c$ give $ad=bc\ge(a+1)^2\ge2(a+1)$, so $a+1\le M$.

Let $B_i(z):=\Pi_{I_i}(z)-1$ for $i=1,2$. Since the constant terms cancel, $H(z)=(-1)^{a+1}z^{-(a+1)}(B_1(z)-B_2(z))$. Taking the negative part and evaluating at $z=1/2$ gives
\[
H^-(1/2)\le 2^{a+1}
\begin{cases}
B_1^-(1/2)+B_2^+(1/2),&a\text{ odd},\\
B_1^+(1/2)+B_2^-(1/2),&a\text{ even}.
\end{cases}
\]
Here the factor $2^{a+1}$ comes from division by $z^{a+1}$, and $B_i^-(1/2)=(\Pi_{I_i})^-(1/2)$. Since $w_{\mathrm{odd}}(I_i)\le2/3$ and $w_{\mathrm{even}}(I_i)\le1/3$, Lemma~\ref{lem:subset} applies to each term on the right-hand side.

\begin{proposition}\label{prop:equal-mass}
If $a\ge2$, then $H^-(1/2)<1$.
\end{proposition}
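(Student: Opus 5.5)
The plan is to sharpen the displayed bound $H^-(1/2)\le2^{a+1}(\cdots)$ so that it becomes usable. Applied naively, it fails. The set $I_2$ contains $a+1$, and the factor $1-(-z)^{a+1}$ produces exactly the leading term $z^{a+1}$ of $(-1)^{a+1}(B_1-B_2)$, i.e.\ the constant $1$ of $H$. Counting that term in $B_2^{\pm}(1/2)$ already contributes $2^{a+1}\cdot2^{-(a+1)}=1$.

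\emph{Step 1: peel off the element $a+1$.} Put $I_2':=I_2\setminus\{a+1\}$ and $B_2':=\Pi_{I_2'}(z)-1$. Then
\[
B_2=-(-z)^{a+1}+B_2'\bigl(1-(-z)^{a+1}\bigr),
\]
and hence
\[
H(z)=1+(-1)^{a+1}z^{-(a+1)}\Bigl(B_1(z)-B_2'(z)\bigl(1-(-z)^{a+1}\bigr)\Bigr).
\]
The negative part of $H$ comes only from the bracket. It is bounded by $2^{a+1}$ times the appropriately signed parts of $B_1$ and of $B_2'(1-(-z)^{a+1})$. Multiplying by the binomial $1-(-z)^{a+1}$ at most multiplies the relevant masses by $1+2^{-(a+1)}$, after mixing the positive and negative parts of $B_2'$.

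\emph{Step 2: lower bounds on the indices, using $a\ge2$.} First, $c\ge a+2$. Indeed, $c=a+1$ would force $b=a+1$ and $d=(a+1)^2/a\notin\Z$. Hence every index in $I_1$ is at least $a+3$, so $w_{\mathrm{odd}}(I_1)+w_{\mathrm{even}}(I_1)\le2^{-(a+2)}$. Every index in $I_2'$ is at least $a+2$, and the indices $d+1,\dots,d+a$ are far out, since $d\ge c\ge a+2$. I will feed these weights into Lemma~\ref{lem:subset}, choosing for each $B$ the part (positive or negative) dictated by the parity of $a$, as in the displayed case split.

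\emph{Step 3: handle the term from $a+2$ exactly.} The dangerous contribution is the element $a+2\in I_2'$, present when $b\ge a+2$. A direct computation shows that it contributes the monomial $-z$ to $H$, i.e.\ mass exactly $1/2$. It cannot be estimated crudely, so I treat it exactly. Write $\Pi_{I_2'}=(1-(-z)^{a+2})\Pi_{I_2''}$ and bound the remaining tail, which has indices at least $a+3$, through Lemma~\ref{lem:subset}. This gives
\[
H^-(1/2)\le\tfrac12+2^{a+1}\bigl(\text{tail masses}\bigr),
\]
where the tail masses are of order $2^{-(a+3)}$ from $I_2''$ and $2^{-(a+2)}$ from $I_1$. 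Whether $I_1$ contributes to the negative or the positive side again depends on parity. In the unfavourable parity its mass is $\approx 2^{a+1}\cdot 2^{-(a+2)}=1/2$ at worst, which by itself would reach the threshold. So in that parity I will either peel the smallest element $c+1$ of $I_1$ as well and check its sign, or use that $b\ge a+1$ forces several elements of $I_1$ to carry cancelling signs.

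\emph{Main obstacle.} The difficulty is this bookkeeping, so that the total stays strictly below $1$ uniformly in $a\ge2$. For large $a$ the estimates of Lemma~\ref{lem:subset}, with denominators $(1-w_{\mathrm{odd}})(1-w_{\mathrm{even}})$ close to $1$, should suffice once the one or two leading elements are treated exactly. For the smallest cases ($a=2,3$, and small $b,c$ such as $(a,b,c,d)=(2,3,4,6)$) I expect to fall back on explicitly expanding the first few coefficients of $H$ and bounding only the remainder crudely.
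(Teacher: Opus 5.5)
Your opening diagnosis is mistaken, though harmless. The $z^{a+1}$ term of $B_2$ is $+z^{a+1}$ for $a$ even and $-z^{a+1}$ for $a$ odd. It therefore sits in $B_2^+$, resp.\ $B_2^-$, which is exactly the part the displayed bound does \emph{not} count; in Lemma~\ref{lem:subset} the element $a+1$ enters only through the denominators. The paper applies the displayed bound directly, so your Step~1 is unnecessary.

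The genuine gap is that you never close the estimate, and with the inputs you list it cannot close. Take $a$ even; the odd case is the mirror image. You only know $I_1\subseteq[a+3,\infty)$ and $I_2\setminus\{a+1,a+2\}\subseteq[a+3,\infty)$. The second inclusion holds because $d\ge c\ge a+2$ gives merely $d+1\ge a+3$, not ``far out''. After scaling by $2^{a+1}$:
\begin{itemize}
\item[(i)] the element $a+2$ contributes $1/2$;
\item[(ii)] the even elements of $I_2$ that are at least $a+4$ contribute up to $1/8+1/32+\cdots=1/6$;
\item[(iii)] the odd elements of $I_1$ contribute up to $1/4+1/16+\cdots=1/3$.
\end{itemize}
Your figure $1/2$ for $I_1$ drops to $1/3$ only once you use the parity split of Lemma~\ref{lem:subset}. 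Even so, these leading masses sum to exactly $1$, for every $a$, before the factors $(1-w_{\mathrm{odd}})^{-1}(1-w_{\mathrm{even}})^{-1}>1$ are applied. Neither of your remedies helps. If $c+1=a+3$, its term $+z^{a+3}$ has the bad sign and really occurs in $H$. Moreover, the linear terms $\pm z^i$ of distinct elements of $I_1$ lie in distinct degrees, so they cannot cancel one another.

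The missing idea is that a low start of $I_1$ and many small elements of $I_2$ cannot occur together. The paper obtains this by splitting on $b-a$ and using $c\ge b$ and $ad=bc$:
\begin{itemize}
\item[(i)] If $b\ge a+3$, then $c+1\ge b+1\ge a+4$, so $I_1\subseteq[a+4,\infty)$ and the $I_1$ mass drops to $1/12$.
\item[(ii)] If $b=a+1$, then $a\mid c$, so $c\ge2a$ and $d\ge2a+2$. Hence $a+2\notin I_2$, and the term $-z$ is absent.
\item[(iii)] If $b=a+2$, then $d\ge(a+2)^2/a>a+4$, so $I_2\subseteq\{a+1,a+2\}\cup[a+6,\infty)$.
\item[(iv)] The pairs $(a,b)=(2,4)$ and $(3,5)$ are treated separately, using $d\ge8$, resp.\ $c\ge6$ and $d\ge10$.
\end{itemize}
In each case Lemma~\ref{lem:subset} gives a bound that is a series in $x=2^{-(a+1)}$ with nonnegative coefficients. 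This bound is largest at the largest admissible $x$, where it is below $63/64$. Your fallback of expanding coefficients for a few small quadruples cannot replace this uniform treatment. For example, $a=2$ with $b\ge5$ is already an infinite family.
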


\begin{proof}
Let $x=2^{-(a+1)}$, so $x\le1/8$. Write $[k,\infty)$ for the integers at least $k$. We first find supersets of $I_1$ and $I_2$:
\begin{equation}\label{eq:supersets}
\begin{array}{c|c|c}
\text{case}&\text{superset of }I_1&\text{superset of }I_2\\\hline
b\ge a+3&[a+4,\infty)&[a+1,\infty)\\
b=a+1&[a+3,\infty)&\{a+1\}\cup[a+5,\infty)\\
b=a+2,\ a\ge4&[a+3,\infty)&\{a+1,a+2\}\cup[a+6,\infty)\\
(a,b)=(2,4)&[5,\infty)&\{3,4\}\cup[9,\infty)\\
(a,b)=(3,5)&[7,\infty)&\{4,5\}\cup[11,\infty)
\end{array}
\end{equation}
For $b\ge a+3$, the inclusions follow from $c+1\ge b+1\ge a+4$. If $b=a+1$, the equality $ad=(a+1)c$ implies $a\mid c$. Since $c\ge a+1$, we obtain $c\ge2a$ and $d\ge2a+2$. If $b=a+2$ and $a\ge4$,
\[
d=\frac{(a+2)c}{a}\ge\frac{(a+2)^2}{a}=a+4+\frac{4}{a}>a+4,
\]
so $d+1\ge a+6$. The remaining cases are $(a,b)=(2,4)$, where $d=2c\ge8$, and $(a,b)=(3,5)$, where $3d=5c$ forces $3\mid c$, so $c\ge6$ and $d\ge10$.

We bound the odd and even weighted sums for $I_1$ and $I_2$ by geometric series over the corresponding sets in \eqref{eq:supersets}. For example, suppose $b\ge a+3$ and $a$ is even. The odd integers in $[a+4,\infty)$ begin at $a+5$, while the even integers begin at $a+4$. Therefore
\[
w_{\mathrm{odd}}(I_1)\le\sum_{k\ge0}2^{-(a+5+2k)}=\frac{x}{12},\qquad
w_{\mathrm{even}}(I_1)\le\sum_{k\ge0}2^{-(a+4+2k)}=\frac{x}{6}.
\]
Similarly, $w_{\mathrm{odd}}(I_2)\le4x/3$ and $w_{\mathrm{even}}(I_2)\le2x/3$. Table~\ref{tab:masses} gives the bounds divided by $x$ for every case. Both bounds in Lemma~\ref{lem:subset} are power series in $w_{\mathrm{odd}}(I)$ and $w_{\mathrm{even}}(I)$ with nonnegative coefficients, so they increase in each of these quantities, and we may insert the upper bounds. With $2^{a+1}=x^{-1}$, this gives the following upper bound in the first row:
\[
H^-(1/2)\le
\frac{1/12+x/36}{(1-x/12)(1-x/6)}
+\frac{2/3}{(1-4x/3)(1-2x/3)}.
\]
The same substitution in each row gives a rational upper bound with nonnegative power-series coefficients in $x$. It is therefore increasing for $0<x\le x_*$, where $x_*$ is the largest admissible value of $x$ in that case. The last column of Table~\ref{tab:masses} lists these bounds at $x_*$. Every value is less than $63/64$, which proves the proposition.
\end{proof}

For $a=1$, we use the following identity of O'Hara \cite{OH} and Zeilberger \cite{Zei}, in the form stated by Zanello \cite[Lemma~4]{Zan}. Let $b,c\ge1$, and set $\lambda_j=0$ for $j>\ell(\lambda)$. Then
\begin{equation}\label{eq:KOH}
\qbinom{b+c}{b}_q
=\sum_{\lambda\vdash b}q^{\sum_i\lambda_i(\lambda_i-1)}
\prod_{j:\,\lambda_j>\lambda_{j+1}}
\qbinom{j(c+2)-\sum_{i<j}\lambda_i-\sum_{i\le j+1}\lambda_i}{\lambda_j-\lambda_{j+1}}_q.
\end{equation}
A factor whose top entry is smaller than its bottom entry is zero; we omit any summand containing such a factor below.
Using \eqref{eq:KOH} and the involution $\tau$ from \eqref{eq:tau}, we obtain an $e$-positive expansion of $L_{b,c}[X]$ in which one summand is $L_{1,bc}[X]$.

\begin{lemma}\label{lem:KOH}
For positive integers $b$, $c$ and every alphabet $X$, $L_{b,c}[X]-L_{1,bc}[X]\in\N[e_1[X],e_2[X]]$.
\end{lemma}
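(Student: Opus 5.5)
The plan is to apply the involution $\tau$ to the identity \eqref{eq:KOH}, read in homogenized form. By \eqref{eq:tau}, $\tau(L_{r,s}[q+t])=\qbinom{r+s}{r}_{q,t}$. Since $\tau$ is an automorphism and $F\mapsto F[q+t]$ is injective, any identity among homogenized Gaussian polynomials and monomials in $qt$ and $q+t$ pulls back to an identity in $\Z[e_1[X],e_2[X]]$. Under this pullback $qt\mapsto -e_2[X]$ and $q+t\mapsto e_1[X]$.

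First homogenize \eqref{eq:KOH}. Replace $q$ by $q/t$ and multiply by $t^{bc}$. Each summand then becomes $(qt)^{\sum_i\lambda_i(\lambda_i-1)}$ times a product of homogenized Gaussian polynomials $\qbinom{n_j}{k_j}_{q,t}$, possibly with an extra power of $t$. That power must vanish by symmetry in $q,t$, because each summand of \eqref{eq:KOH} is a palindromic product of Gaussian polynomials. Concretely, the degrees satisfy
\[
2\sum_i\lambda_i(\lambda_i-1)+\sum_j k_j(n_j-k_j)=bc,
\]
since every term has the same centre of symmetry. One checks this either from the known proof or from palindromicity plus a degree count.

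Next apply $\tau^{-1}=\tau$. The factor $(qt)^{\sum\lambda_i(\lambda_i-1)}$ becomes $(-e_2[X])^{\sum\lambda_i(\lambda_i-1)}$. Each $\lambda_i(\lambda_i-1)$ is even, so the sign is $+1$. Each $\qbinom{n_j}{k_j}_{q,t}$ becomes $L_{k_j,n_j-k_j}[X]$, which lies in $\N[e_1[X],e_2[X]]$ by Sagan--Savage. This gives
\[
L_{b,c}[X]=\sum_{\lambda\vdash b}e_2[X]^{\sum_i\lambda_i(\lambda_i-1)}\prod_jL_{k_j,n_j-k_j}[X],
\]
an $e$-positive expansion.

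It remains to isolate $L_{1,bc}[X]$. Take the summand $\lambda=(1^b)$. Its only index $j$ with $\lambda_j>\lambda_{j+1}$ is $j=b$. The top entry is
\[
b(c+2)-(b-1)-b=bc+1,
\]
with bottom entry $1$, and the $q$-exponent is $0$. So this summand is exactly $\qbinom{bc+1}{1}_q$, which corresponds to $L_{1,bc}[X]$. The remaining summands are $e$-positive, which proves the lemma.

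The main obstacle is the homogenization bookkeeping. One must confirm that no stray power of $t$ (equivalently, no odd power of $qt$) survives. This is what guarantees that $\tau$ introduces no signs. Also, degenerate factors with top entry below bottom entry vanish identically and cause no trouble. I would verify the degree identity by comparing $q$-degrees: the symmetry $q^{\deg}f(1/q)=f(q)$ of each Gaussian factor forces each summand to be centred at $bc/2$.
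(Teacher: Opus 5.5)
Your route is the paper's: homogenize \eqref{eq:KOH}, apply $\tau$, use that $E_\lambda:=\sum_i\lambda_i(\lambda_i-1)$ is even so no sign appears, and peel off the $\lambda=(1^b)$ summand, which is $L_{1,bc}[X]$.

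The one step that does not go through as you argue it is the degree identity $2E_\lambda+\sum_jk_j(n_j-k_j)=bc$. You rightly identify it as the crux. After $q\mapsto q/t$ and multiplication by $t^{bc}$, the summand for $\lambda$ carries the factor $t^{\,bc-2E_\lambda-\sum_jk_j(n_j-k_j)}$, and only this identity makes that power vanish.

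Palindromicity of the Gaussian factors does not prove it. It only shows that each summand is symmetric about its own centre $E_\lambda+\frac12\sum_jk_j(n_j-k_j)$. It does not force that centre to equal $bc/2$. A palindromic polynomial can be a sum of palindromic pieces with different centres: $1+q^2$ has centre $1$, but it is the sum of $1$ and $q^2$, with centres $0$ and $2$. In the same way, symmetry in $q,t$ of the homogenized total does not rule out compensating stray powers of $t$ in individual summands. So ``palindromicity plus a degree count'' is circular unless the degree count is the identity itself.

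The fix is the direct computation the paper gives. The top entry minus the bottom entry of the $j$th factor is $n_j-k_j=j(c+2)-2\sum_{i\le j}\lambda_i$. With $k_j=\lambda_j-\lambda_{j+1}$, one has $\sum_j jk_j=b$ and $\sum_jk_j\sum_{i\le j}\lambda_i=\sum_i\lambda_i^2$. Hence $\sum_jk_j(n_j-k_j)=b(c+2)-2\sum_i\lambda_i^2=bc-2E_\lambda$.

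Alternatively, you may cite the standard O'Hara--Zeilberger fact that every summand of \eqref{eq:KOH} is centred at $bc/2$. That fact is this same computation, not a consequence of symmetry. With the identity established, the rest of your argument is correct.
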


\begin{proof}
The top entry minus the bottom entry of the $j$th Gaussian factor in \eqref{eq:KOH} is $j(c+2)-2\sum_{i\le j}\lambda_i$. Moreover,
\[
\sum_jj(\lambda_j-\lambda_{j+1})=b,\qquad
\sum_j(\lambda_j-\lambda_{j+1})\sum_{i\le j}\lambda_i=\sum_j\lambda_j^2,
\]
so $\sum_j(\lambda_j-\lambda_{j+1})(j(c+2)-2\sum_{i\le j}\lambda_i)=bc-2\sum_i\lambda_i(\lambda_i-1)$. Replace $q^{\sum_i\lambda_i(\lambda_i-1)}$ by $(qt)^{\sum_i\lambda_i(\lambda_i-1)}$ and each Gaussian polynomial in \eqref{eq:KOH} by its two-variable homogenization. Every term then becomes homogeneous of degree $bc$ and reduces to the original term at $t=1$. Since $\qbinom{b+c}{b}_{q,t}$ is also homogeneous of degree $bc$ and reduces to $\qbinom{b+c}{b}_q$ at $t=1$, we obtain
\[
\qbinom{b+c}{b}_{q,t}
=\sum_{\lambda\vdash b}(qt)^{\sum_i\lambda_i(\lambda_i-1)}
\prod_{j:\,\lambda_j>\lambda_{j+1}}
\qbinom{j(c+2)-\sum_{i<j}\lambda_i-\sum_{i\le j+1}\lambda_i}{\lambda_j-\lambda_{j+1}}_{q,t}.
\]
By \eqref{eq:tau}, the left side equals $\tau(L_{b,c}[q+t])$, and each factor $\qbinom{n}{k}_{q,t}$ on the right equals $\tau(L_{k,n-k}[q+t])$. Specialization to $q+t$ is injective on $\Z[e_1[X],e_2[X]]$, and the exponent $\sum_i\lambda_i(\lambda_i-1)$ is even. Hence applying $\tau$ gives
\[
L_{b,c}[X]=\sum_{\lambda\vdash b}e_2[X]^{\sum_i\lambda_i(\lambda_i-1)}
\prod_{j:\,\lambda_j>\lambda_{j+1}}
L_{\lambda_j-\lambda_{j+1},\,j(c+2)-2\sum_{i\le j}\lambda_i}[X].
\]
Every summand belongs to $\N[e_1[X],e_2[X]]$. For $\lambda=(1^b)$, only the factor with $j=b$ occurs; its bottom entry is $1$, and its top entry is $bc+1$. The corresponding summand is $L_{1,bc}[X]$. Subtracting it proves the lemma.

\begin{table}[tb]
\centering
\caption{Bounds for the weighted sums and $H^-(1/2)$}
\label{tab:masses}
\small
\renewcommand{\arraystretch}{1.2}
\setlength{\tabcolsep}{7pt}
\begin{tabular}{@{}lcccc@{}}
\toprule
Case & $I_1$ & $I_2$ & $x_*$ & $H^-(1/2)$ \\
\midrule
$b\ge a+3$, $a$ even
  & $(1/12,1/6)$ & $(4/3,2/3)$
  & $1/8$ & $<\frac{31}{32}$ \\
$b\ge a+3$, $a$ odd
  & $(1/6,1/12)$ & $(2/3,4/3)$
  & $1/16$ & $<\frac{63}{64}$ \\
\addlinespace[3pt]
$b=a+1$, $a$ even
  & $(1/3,1/6)$ & $(13/12,1/24)$
  & $1/8$ & $<\frac12$ \\
$b=a+1$, $a$ odd
  & $(1/6,1/3)$ & $(1/24,13/12)$
  & $1/16$ & $<\frac12$ \\
\addlinespace[3pt]
$b=a+2$, $a\ge4$ even
  & $(1/3,1/6)$ & $(49/48,13/24)$
  & $1/32$ & $<\frac{15}{16}$ \\
$b=a+2$, $a\ge5$ odd
  & $(1/6,1/3)$ & $(13/24,49/48)$
  & $1/64$ & $<\frac{15}{16}$ \\
\addlinespace[3pt]
$(a,b)=(2,4)$
  & $(1/3,1/6)$ & $(49/48,49/96)$
  & $1/8$ & $<\frac{63}{64}$ \\
$(a,b)=(3,5)$
  & $(1/6,1/12)$ & $(49/96,193/192)$
  & $1/16$ & $<\frac34$ \\
\bottomrule
\end{tabular}
\end{table}
\end{proof}

We are now in a position to prove Theorem~\ref{thm:equal}.

\begin{proof}[Proof of Theorem~\ref{thm:equal}]
If $a=b$, then $c=d$ and the difference is zero. Suppose $a<b$, and let $F[X]=(-1)^{a+1}(L_{b,c}[X]-L_{a,d}[X])$ and $C_j:=c_j(F)$. Equation~\eqref{eq:H} and Proposition~\ref{prop:kernel} give $C_j=0$ for $j<a+1$ and $C_{a+1}=H(0)=1$.

If $a\ge2$, Proposition~\ref{prop:equal-mass} gives $H^-(1/2)<1$, so Corollary~\ref{cor:criterion} yields $C_j\ge\binom{M-a-1}{j-a-1}$. If $a=1$, Lemma~\ref{lem:KOH} gives $F[X]=L_{b,c}[X]-L_{1,bc}[X]\in\N[e_1[X],e_2[X]]$. Write $F(1,y)=\sum_k\gamma_ky^k$, with $\gamma_k\ge0$. The relations $C_0=C_1=0$ and $C_2=1$, together with \eqref{eq:transform}, imply $\gamma_0=\gamma_1=0$ and $\gamma_2=1$. Applying \eqref{eq:transform} again, we obtain
\[
C_j=\binom{M-2}{j-2}+\sum_{k=3}^j\binom{M-k}{j-k}\gamma_k\ge\binom{M-2}{j-2}.
\]
In both cases, the lower bound is equivalent to
\[
(-1)^{a+1}(L_{b,c}[X]-L_{a,d}[X])-e_1[X]^{ad-2(a+1)}e_2[X]^{a+1}\in\N[e_1[X],e_2[X],h_2[X]].
\]
The Pieri rule now gives the Schur positivity asserted in Theorem~\ref{thm:equal}.
\end{proof}

\begin{example}\label{ex:equal}
Take $(a,b,c,d)=(2,3,4,6)$, so that $ad=12$ and $M=6$. From \eqref{eq:Lrs} one computes
\[
\begin{aligned}
L_{2,6}[X]-L_{3,4}[X]
&=e_1[X]^6e_2[X]^3+5e_1[X]^4e_2[X]^4+6e_1[X]^2e_2[X]^5+e_2[X]^6\\
&=h_2[X]^3e_2[X]^3+8h_2[X]^2e_2[X]^4+19h_2[X]e_2[X]^5+13e_2[X]^6.
\end{aligned}
\]
Here $(-1)^{a+1}=-1$ and $(C_3,C_4,C_5,C_6)=(1,8,19,13)$, so the bounds $C_j\ge\binom{3}{j-3}$ of Theorem~\ref{thm:equal} hold. In two variables, the Schur expansion is
\[
L_{2,6}[q+t]-L_{3,4}[q+t]=s_{(9,3)}[q+t]+10s_{(8,4)}[q+t]+30s_{(7,5)}[q+t]+22s_{(6,6)}[q+t].
\]
\end{example}

\begin{proof}[Proof of Corollary~\ref{cor:bergeron}]
Suppose first that $c>a$. Then $d/b=c/a>1$ and $a\le\min(b,c)\le\max(b,c)\le d$. Theorem~\ref{thm:equal}, applied to $(a,\min(b,c),\max(b,c),d)$ together with $L_{b,c}[X]=L_{c,b}[X]$, gives the required sign. Since $C_{a+1}=1$ there, the difference vanishes exactly when $a=\min(b,c)$, that is, when $a=b$. If $c<a$, apply the first case to $(c,d,a,b)$ in place of $(a,b,c,d)$. This changes the signs of both the difference and $\varepsilon$, and the difference vanishes exactly when $c=d$, that is, when $a=b$. If $a=c$, then $b=d$ and the difference vanishes. Hence the difference vanishes exactly when $a=b$ or $a=c$.

A nonzero element of $\N[e_1[X],e_2[X],h_2[X]]$ has a nonzero specialization to $q+t$, because the three generators specialize to nonzero polynomials with nonnegative coefficients. Hence it remains nonzero for every alphabet with at least two letters. A nonzero Schur-positive polynomial cannot have a Schur-positive negative, so the sign is unique.
\end{proof}

Theorem~\ref{thm:equal} also determines the first nonzero coefficient in the two-variable Schur expansion. Setting $q=1$, so that $e_1[1+t]=1+t$ and $e_2[1+t]=t$, the right side of \eqref{eq:equal} is $t^{a+1}+O(t^{a+2})$. On the other hand, \eqref{eq:two-row} gives $s_{(ad-j,j)}[1+t]=t^j(1+t+\cdots+t^{ad-2j})$. Hence the coefficient of $s_{(ad-j,j)}[q+t]$ in the signed difference is zero for $j<a+1$ and one for $j=a+1$.

\subsection{Proof of Theorem~\ref{thm:unequal}}\label{sec:unequal}

Since $L_{1,n}[X]=U_{n+1}[X]$, Lemma~\ref{lem:KOH} gives $L_{u,v}[X]-U_{uv+1}[X]\in\N[e_1[X],e_2[X]]$. We next compare $U_{rs+2}[X]-e_1[X]L_{r,s}[X]$.

Fix $2\le r\le s$. The products introduced in Subsection~\ref{sec:criterion} give $(-z;-z)_r=(1+z)\Pi_{\{2,\ldots,r\}}(z)$ and $\qbinom{r+s}{r}_{-z}=\Pi_{\{s+1,\ldots,s+r\}}(z)/(-z;-z)_r$. Define
\begin{equation}\label{eq:Jrs}
J_{r,s}(z):=\frac{\bigl(1-(-z)^{rs+2}\bigr)\Pi_{\{2,\ldots,r\}}(z)-(1-z)\Pi_{\{s+1,\ldots,s+r\}}(z)}{z}.
\end{equation}
The two $\Pi$-products in \eqref{eq:Jrs} equal $1+O(z^2)$ and $1+O(z^3)$, respectively. Thus the numerator is $z+O(z^2)$, so $J_{r,s}(z)\in\Z[z]$ and $J_{r,s}(0)=1$. Let $F[X]:=U_{rs+2}[X]-e_1[X]L_{r,s}[X]$, which is homogeneous of degree $rs+1$ in $e_1[X]$ and $e_2[X]$. Lemma~\ref{lem:binet} gives
\[
F\Bigl(1,\frac{z}{(1-z)^2}\Bigr)=\frac{[rs+2]_{-z}-(1-z)\qbinom{r+s}{r}_{-z}}{(1-z)^{rs+1}},
\]
and
\[
[rs+2]_{-z}=\frac{1-(-z)^{rs+2}}{1+z}=\frac{\bigl(1-(-z)^{rs+2}\bigr)\Pi_{\{2,\ldots,r\}}(z)}{(-z;-z)_r}.
\]
Hence
\begin{equation}\label{eq:Psi}
F\Bigl(1,\frac{z}{(1-z)^2}\Bigr)=\frac{z\,J_{r,s}(z)}{(1-z)^{rs+1}(-z;-z)_r}.
\end{equation}

We use $(f(z)+g(z))^-(1/2)\le f^-(1/2)+g^-(1/2)$ and, when $f(0)=0$, $(f(z)/z)^-(1/2)=2f^-(1/2)$.

\begin{lemma}\label{lem:J}
For every $2\le r\le s$, we have $J_{r,s}^-(1/2)<1$.
\end{lemma}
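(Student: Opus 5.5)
We aim to bound $J_{r,s}^-(1/2)$ by splitting the numerator of \eqref{eq:Jrs} into pieces whose negative and positive parts are controlled by Lemma~\ref{lem:subset}. Write $P:=\Pi_{\{2,\ldots,r\}}(z)$ and $Q:=\Pi_{\{s+1,\ldots,s+r\}}(z)$. The numerator is
\[
P-(-z)^{rs+2}P-(1-z)Q=z+(P-1)-(Q-1)+z(Q-1)-(-z)^{rs+2}P .
\]
Dividing by $z$, the constant term is $1$. Using subadditivity of the negative part and $(f/z)^-(1/2)=2f^-(1/2)$ for $f(0)=0$, we get
\[
J_{r,s}^-(1/2)\le 2(P-1)^-(1/2)+2(Q-1)^+(1/2)+(Q-1)^-(1/2)+2^{-(rs+1)}\bigl(P^{\pm}(1/2)\bigr),
\]
where the last term uses $P^+$ or $P^-$ according to the parity of $rs$. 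Since only finitely many terms occur, a crude bound suffices there: $P^{\pm}(1/2)\le\prod_{i=2}^r(1+2^{-i})<3/2$, so that term is at most $3\cdot2^{-(rs+2)}\le 3/64$.

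Next we estimate each term with Lemma~\ref{lem:subset}.

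\textbf{The set $\{2,\ldots,r\}$.} Here $w_{\mathrm{odd}}\le 1/8+1/32+\cdots=1/6$ and $w_{\mathrm{even}}\le 1/4+1/16+\cdots=1/3$. The lemma gives
\[
(P-1)^-(1/2)=P^-(1/2)\le \frac{1/3}{(5/6)(2/3)}=\frac35 .
\]
Doubled, this gives $6/5>1$, which is too weak. So the main obstacle is the term $2P^-(1/2)$, and the generic lemma is too crude for it. Instead, compute $P$ more precisely. Its lowest terms are $P=1-z^2+z^3-z^5+\cdots$, so the dominant negative contribution is $z^2$, worth $1/4$, hence $1/2$ after doubling. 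I would try to bound the remaining negative mass directly: pair each subset $S$ containing an even index with $S\cup\{3\}$ or $S\setminus\{3\}$ to exhibit cancellation. If that fails, split off the factors $(1-z^2)(1+z^3)$ explicitly and apply Lemma~\ref{lem:subset} only to $\{4,\ldots,r\}$. That set has $w_{\mathrm{even}}\le1/12$ and $w_{\mathrm{odd}}\le1/24$, so its product is $1+E$ with $E^\pm(1/2)$ small, roughly $\le 0.1$. This would give $P^-(1/2)\le 1/4+1/32+$ small, i.e.\ about $0.33$, and doubled about $0.66$.

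\textbf{The set $\{s+1,\ldots,s+r\}$.} Since $s\ge r\ge2$, this set lies in $[3,\infty)$. We get $w_{\mathrm{odd}}\le 2^{-(s+1)}\cdot 4/3$ and $w_{\mathrm{even}}$ similarly small, both at most $1/6$. Lemma~\ref{lem:subset} then gives $(Q-1)^+(1/2)\lesssim 0.2$ and $(Q-1)^-(1/2)\lesssim 0.1$ in the worst case $s=2$. For $s\ge3$ these bounds roughly halve with each increment of $s$.

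The sum in the worst case $r=s=2$ may exceed $1$ under these crude bounds. So I would treat small cases $(r,s)$, say $s\le 3$ or $4$, by explicit computation of the polynomial $J_{r,s}$; for $r=s=2$, $J_{2,2}$ is an explicit short polynomial. For large $s$, I would use the monotone geometric bounds as in Proposition~\ref{prop:equal-mass}, checking that the total is below $1$ at the smallest admissible $s$.

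We expect the bottleneck throughout to be the factor-of-two loss on $P^-$ coming from the division by $z$.
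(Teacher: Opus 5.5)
Your reduction is the paper's own. You use the same four-term bound $2(P-1)^-(1/2)+2(Q-1)^+(1/2)+(Q-1)^-(1/2)+2^{-(rs+1)}P^{\pm}(1/2)$. You use the same splitting $P=(1-z^2)(1+z^3)\,\Pi_{\{4,\ldots,r\}}(z)$ to avoid the loss you correctly identified. And you expand $J_{2,2}$ directly, as the paper does.

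The difference is in the endgame. The paper sharpens Lemma~\ref{lem:subset} for sets $I\subseteq\{4,5,\ldots\}$ by separating subsets according to the parity of the number of even indices. This gives $(\Pi_I)^+(1/2)\le 1/\bigl((1-x)(1-y^2)\bigr)$ and $(\Pi_I)^-(1/2)\le y/\bigl((1-x)(1-y^2)\bigr)$. With $\Delta=(1-1/24)(1-1/144)$, one uniform estimate then covers every $s\ge3$, with total $67/(72\Delta)+1/64\approx0.993$. Only $r=s=2$ is expanded, but the margin is under one percent. You instead keep the generic lemma and check small cases by hand. That needs no new estimate and leaves more slack, and it does work.

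Two of your numbers need fixing, though.

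First, write $A=(1-z^2)(1+z^3)$ and $R=\Pi_{\{4,\ldots,r\}}$, so that $P^-(1/2)\le A^+(1/2)R^-(1/2)+A^-(1/2)R^+(1/2)$. The cross term $A^+(1/2)R^-(1/2)$ is not negligible. The generic lemma gives $R^-(1/2)\le 24/253$ and $R^+(1/2)\le 267/253$, so $2P^-(1/2)\le 3267/4048\approx0.81$, not about $0.66$. With this value the uniform estimate fails at $s=3$. There the $Q$-terms can contribute $52/253\approx0.21$, and $0.81+0.21>1$. It succeeds for $s\ge4$, where the $Q$-terms are at most $121/1081\approx0.11$ and the total is about $0.92$. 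So $s=3$ must be among your hand-checked cases. Alternatively, at $s=3$ you can use $r\le3$, which gives $2P^-(1/2)\le 9/16$.

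Second, the partial products $\prod_{i=2}^r(1+2^{-i})$ exceed $3/2$ once $r\ge5$; they increase to about $1.59$. Use $e^{1/2}<2$ as the paper does. This is harmless, since $r\ge5$ forces $rs\ge25$.
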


\begin{proof}
Let $P(z):=\Pi_{\{2,\ldots,r\}}(z)$ and
$Q(z):=\Pi_{\{s+1,\ldots,s+r\}}(z)$.
Rearranging \eqref{eq:Jrs} gives
\[
J_{r,s}(z)
=1+\frac{P(z)-1}{z}-\frac{Q(z)-1}{z}
 +(Q(z)-1)-\frac{(-z)^{rs+2}P(z)}{z}.
\]
By subadditivity of the negative part, and since each coefficient of $P(z)$ is at most the corresponding coefficient of $\prod_{i=2}^r(1+z^i)$ in absolute value,
\[
\begin{aligned}
J_{r,s}^-(1/2)
\le 2(P-1)^-(1/2)+2(Q-1)^+(1/2)+(Q-1)^-(1/2)+2^{-(rs+1)}\prod_{i=2}^r(1+2^{-i}).
\end{aligned}
\]

Suppose first that $s\ge3$, and set
$\Delta:=(1-1/24)(1-1/144)$.
For any finite set $I\subseteq\{4,5,\ldots\}$, write
$x=w_{\mathrm{odd}}(I)\le1/24$ and
$y=w_{\mathrm{even}}(I)\le1/12$.
As in the proof of Lemma~\ref{lem:subset}, the total weight of subsets containing $h$ odd indices and $k$ even indices is at most $x^hy^k$. Such a subset has sign $(-1)^k$ in \eqref{eq:subsets}. Summing separately over even and odd $k$, and noting that cancellation can only decrease the positive and negative contributions, gives
\[
\begin{aligned}
(\Pi_I)^+(1/2)\le\frac1{(1-x)(1-y^2)}\le\frac1\Delta,\qquad
(\Pi_I)^-(1/2)\le\frac{y}{(1-x)(1-y^2)}\le\frac1{12\Delta}.
\end{aligned}
\]
Since the empty subset contributes the constant term $1$,
\[
(\Pi_I-1)^+(1/2)=(\Pi_I)^+(1/2)-1
\le\frac1\Delta-1\le\frac7{144\Delta}.
\]
Every element of $\{s+1,\ldots,s+r\}$ is at least $4$, so these bounds give
\[
(Q-1)^+(1/2)\le\frac7{144\Delta},\qquad
(Q-1)^-(1/2)\le\frac1{12\Delta}.
\]

For $r\ge3$, write $P(z)=A(z)R(z)$, where
\[
A(z):=(1-z^2)(1+z^3),\qquad
R(z):=\Pi_{\{4,\ldots,r\}}(z).
\]
The same bounds give $R^+(1/2)\le1/\Delta$ and $R^-(1/2)\le1/(12\Delta)$.
Since $A(z)=1-z^2+z^3-z^5$, we have $A^+(1/2)=9/8$ and $A^-(1/2)=9/32$. Consequently,
\[
\begin{aligned}
2(P-1)^-(1/2)
&\le 2\bigl(A^+(1/2)R^-(1/2)+A^-(1/2)R^+(1/2)\bigr)\\
&\le 2\left(\frac9{8}\frac1{12\Delta}+\frac9{32}\frac1\Delta\right)
=\frac3{4\Delta}.
\end{aligned}
\]
This bound also holds for $r=2$, since $P(z)=1-z^2$ and $2(P-1)^-(1/2)=1/2$.

Finally, $\prod_{i=2}^r(1+2^{-i})<\exp\bigl(\sum_{i\ge2}2^{-i}\bigr)=e^{1/2}<2$. Since $rs\ge6$, the last term in the initial estimate is less than $2^{-rs}\le1/64$. Combining the estimates, we obtain
\[
J_{r,s}^-(1/2)
<\frac1\Delta\left(\frac34+\frac7{72}+\frac1{12}\right)+\frac1{64}
=\frac{67}{72\Delta}+\frac1{64}<1.
\]

For $r=s=2$, direct expansion gives
$J_{2,2}(z)=1-z-z^2+2z^3-z^4-z^5+z^6$,
so $J_{2,2}^-(1/2)=27/32<1$.
\end{proof}

The estimate in Lemma~\ref{lem:J} allows us to apply Corollary~\ref{cor:criterion} to \eqref{eq:Psi}.

\begin{proposition}\label{prop:upper}
For positive integers $r,s$ and every alphabet $X$,
\[
U_{rs+2}[X]-e_1[X]L_{r,s}[X]-e_1[X]^{rs-1}e_2[X]\in\N[e_1[X],e_2[X],h_2[X]].
\]
\end{proposition}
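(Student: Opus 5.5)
The proof splits according to whether $\min(r,s)\ge2$ or $\min(r,s)=1$. Since $L_{r,s}[X]=L_{s,r}[X]$, we may assume $r\le s$ throughout.

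\emph{Main case $2\le r\le s$.} Set $F[X]:=U_{rs+2}[X]-e_1[X]L_{r,s}[X]$. It is homogeneous of degree $N=rs+1$, and we write $N=2M+\delta$. By \eqref{eq:Psi}, $F$ has exactly the form \eqref{eq:criterion-hyp}, with $m=1$, $b=r\ge1$ and $J(z)=J_{r,s}(z)$. Here $J_{r,s}\in\Z[z]$ and $J_{r,s}(0)=1$, as noted after \eqref{eq:Jrs}. Lemma~\ref{lem:J} supplies the remaining hypothesis, $J_{r,s}^-(1/2)<1$. We also need $m=1\le M$, which holds because $rs+1\ge5$ gives $M\ge2$. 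Corollary~\ref{cor:criterion} then gives
\[
F[X]-e_1[X]^{N-2}e_2[X]\in\N[e_1[X],e_2[X],h_2[X]].
\]
Since $N-2=rs-1$, this is exactly the claim.

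\emph{Remaining case $r=1$.} Here $L_{1,s}[X]=U_{s+1}[X]$, so the element in question is
\[
U_{s+2}[X]-e_1[X]U_{s+1}[X]-e_1[X]^{s-1}e_2[X].
\]
The recurrence \eqref{eq:U} gives $U_{s+2}[X]-e_1[X]U_{s+1}[X]=e_2[X]U_s[X]$, so the element equals $e_2[X]\bigl(U_s[X]-e_1[X]^{s-1}\bigr)$.

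\begin{itemize}
\item For $s\ge1$, \eqref{eq:U-explicit} shows that $U_s[X]$ has leading term $e_1[X]^{s-1}$ with coefficient $\binom{s-1}{0}=1$. All its coefficients are nonnegative, so $U_s[X]-e_1[X]^{s-1}\in\N[e_1[X],e_2[X]]$. Multiplying by $e_2[X]$ keeps us in $\N[e_1[X],e_2[X]]\subseteq\N[e_1[X],e_2[X],h_2[X]]$.
\item The degenerate value $s=0$ does not arise, because $r,s$ are positive.
\end{itemize}

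\emph{Where the difficulty lies.} The substantive estimate is $J_{r,s}^-(1/2)<1$, and it has already been established in Lemma~\ref{lem:J}. What remains is bookkeeping:
\begin{itemize}
\item check that \eqref{eq:Psi} matches \eqref{eq:criterion-hyp} exactly, including the exponent $N=rs+1$ of $(1-z)$ and the denominator $(-z;-z)_r$ with $b=r$;
\item check that the monomial produced by Corollary~\ref{cor:criterion}, namely $e_1^{N-2m}e_2^m$ with $m=1$, is $e_1^{rs-1}e_2$;
\item treat the $r=1$ edge case separately as above, since Lemma~\ref{lem:J} requires $r\ge2$.
\end{itemize}
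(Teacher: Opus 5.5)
Your proposal is correct and follows the paper's proof essentially verbatim: you handle $r=1$ by writing the element as $e_2[X](U_s[X]-e_1[X]^{s-1})$ and applying \eqref{eq:U-explicit}, and you handle $2\le r\le s$ by applying Corollary~\ref{cor:criterion} to \eqref{eq:Psi} with $N=rs+1$, $m=1$, $b=r$, $J=J_{r,s}$, using Lemma~\ref{lem:J}. Your explicit check that $m=1\le M$ is a small verification that the paper leaves implicit.
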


\begin{proof}
By symmetry, assume $r\le s$. If $r=1$, the recurrence gives $U_{s+2}[X]-e_1[X]L_{1,s}[X]=e_2[X]U_s[X]$. Equation~\eqref{eq:U-explicit} gives $U_s[X]-e_1[X]^{s-1}\in\N[e_1[X],e_2[X]]$, which proves the assertion.

If $r\ge2$, Equation~\eqref{eq:Psi} and Lemma~\ref{lem:J} verify the hypotheses of Corollary~\ref{cor:criterion} for $U_{rs+2}[X]-e_1[X]L_{r,s}[X]$ with $N=rs+1$, $m=1$, $b=r$, and $J(z)=J_{r,s}(z)$. Thus $U_{rs+2}[X]-e_1[X]L_{r,s}[X]-e_1[X]^{rs-1}e_2[X]\in\N[e_1[X],e_2[X],h_2[X]]$.
\end{proof}

These comparisons provide the ingredients for Theorem~\ref{thm:unequal}.

\begin{proof}[Proof of Theorem~\ref{thm:unequal}]
Let $F[X]:=L_{u,v}[X]-e_1[X]^{uv-rs}L_{r,s}[X]$ and $D_j:=c_j(F)$. Since $uv-rs\ge1$, write
\begin{equation}\label{eq:decomp}
\begin{split}
L_{u,v}[X]-e_1[X]^{uv-rs}L_{r,s}[X]
&=(L_{u,v}[X]-U_{uv+1}[X])+(U_{uv+1}[X]-e_1[X]^{uv-rs-1}U_{rs+2}[X])\\
&\qquad+e_1[X]^{uv-rs-1}(U_{rs+2}[X]-e_1[X]L_{r,s}[X]).
\end{split}
\end{equation}
The first term lies in $\N[e_1[X],e_2[X]]$ by Lemma~\ref{lem:KOH}. For the middle term, applying the recurrence \eqref{eq:U} $uv-rs-1$ times gives
\[
U_{uv+1}[X]-e_1[X]^{uv-rs-1}U_{rs+2}[X]
=e_2[X]\sum_{i=0}^{uv-rs-2}e_1[X]^iU_{uv-1-i}[X].
\]
The sum is empty when $uv-rs=1$. For each $i$, the full summand $e_2[X]e_1[X]^iU_{uv-1-i}[X]$ has $e_1[X]^{uv-2}e_2[X]$ as its term with the largest power of $e_1[X]$, by \eqref{eq:U-explicit}; subtracting this term leaves an element of $\N[e_1[X],e_2[X]]$. Thus the middle term in \eqref{eq:decomp}, less $(uv-rs-1)e_1[X]^{uv-2}e_2[X]$, is $e$-positive. Proposition~\ref{prop:upper} shows that the last term, less $e_1[X]^{uv-2}e_2[X]$, lies in $\N[e_1[X],e_2[X],h_2[X]]$. Adding these expressions proves \eqref{eq:unequal-base}.

By \eqref{eq:transform}, the coefficient of $e_1[X]^\delta h_2[X]^{M-j}e_2[X]^j$ in $e_1[X]^{uv-2}e_2[X]$ is $\binom{M-1}{j-1}$. Lemma~\ref{lem:basis}(b) therefore gives $D_j\ge(uv-rs)\binom{M-1}{j-1}$. Finally, \eqref{eq:Lrs-lin} gives constant term zero and linear coefficient $(uv-1)-(rs-1)=uv-rs$ for $L_{u,v}(1,y)-L_{r,s}(1,y)$. Applying \eqref{eq:transform} yields $D_0=0$ and $D_1=uv-rs$.
\end{proof}

\section{Lucas multinomial differences}\label{sec:multi}

In Subsection~\ref{sec:dominance} we construct a chain between two partitions in dominance order and compute the change in degree at each move. In Subsection~\ref{sec:adjacent} we use \eqref{eq:atoms} to prove $e$-positivity for one move. Subsection~\ref{sec:multi-proof} combines these comparisons to prove Theorem~\ref{thm:multi}.

\subsection{Degrees and dominance chains}\label{sec:dominance}

We also use $L_\alpha[X]$ when $\alpha$ is a list of nonnegative integers with positive sum. The factorial quotient is unchanged by rearranging the entries or omitting zeros, since $U_0![X]=1$. In particular, $L_{(n)}[X]=1$.

For a partition $\alpha$, let $\rho(\alpha):=[e_1[X]^{d(\alpha)-2}e_2[X]]L_\alpha[X]$ when $d(\alpha)\ge2$, and let $\rho(\alpha):=0$ otherwise.

\begin{lemma}\label{lem:factor}
Let $\alpha=(\alpha_1,\ldots,\alpha_\ell)\vdash n\ge1$. Then
\begin{equation}\label{eq:factor}
L_\alpha[X]=\prod_{i=2}^\ell L_{\alpha_1+\cdots+\alpha_{i-1},\alpha_i}[X]\in\N[e_1[X],e_2[X]].
\end{equation}
In particular, $L_\alpha[X]$ is homogeneous of degree $d(\alpha)=\sum_{i<j}\alpha_i\alpha_j=(n^2-\sum_i\alpha_i^2)/2$, and $L_\alpha[X]-e_1[X]^{d(\alpha)}\in\N[e_1[X],e_2[X]]$. Moreover, $L_\alpha(1,0)=1$ and
\begin{equation}\label{eq:rho}
[y]\,L_\alpha(1,y)=d(\alpha)-\ell(\alpha)+1,
\end{equation}
so that $\rho(\alpha)=d(\alpha)-\ell(\alpha)+1$.
\end{lemma}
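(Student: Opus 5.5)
The factorization \eqref{eq:factor} comes from telescoping the factorial quotient. Write $s_i:=\alpha_1+\cdots+\alpha_i$, so $s_0=0$ and $s_\ell=n$. By \eqref{eq:Lrs},
\[
L_{s_{i-1},\alpha_i}[X]=\frac{U_{s_i}![X]}{U_{s_{i-1}}![X]\,U_{\alpha_i}![X]}.
\]
Taking the product over $i=2,\ldots,\ell$, the factors $U_{s_i}!$ telescope, and the product equals $U_n![X]/(U_{\alpha_1}![X]\cdots U_{\alpha_\ell}![X])=L_\alpha[X]$. Here we use that $U_{s_1}!=U_{\alpha_1}!$ survives in the denominator. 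Each factor lies in $\N[e_1[X],e_2[X]]$ by the theorem of Sagan and Savage recalled in Subsection~\ref{sec:sym-lucas}, so the product does as well.

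\textbf{Degree.} Each factor $L_{r,s}[X]$ is homogeneous of degree $rs$. Hence $L_\alpha[X]$ is homogeneous of degree
\[
\sum_{i\ge2}s_{i-1}\alpha_i=\sum_{j<i}\alpha_j\alpha_i=\tfrac12\Bigl(n^2-\sum_i\alpha_i^2\Bigr).
\]

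\textbf{Leading term.} By \eqref{eq:Lrs-lin}, each factor $L_{r,s}(1,y)$ with $r,s\ge1$ has constant term $1$ and linear coefficient $rs-1$. Under \eqref{eq:dehom}, the constant term is the coefficient of $e_1[X]^{rs}$, so each factor equals $e_1[X]^{rs}$ plus an $e$-positive remainder. A product of polynomials of this shape, each with nonnegative coefficients, equals $e_1[X]^{d(\alpha)}$ plus an element of $\N[e_1[X],e_2[X]]$. This gives $L_\alpha[X]-e_1[X]^{d(\alpha)}\in\N[e_1[X],e_2[X]]$, and also $L_\alpha(1,0)=1$.

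\textbf{Linear coefficient.} The linear coefficient of a product of series with constant term $1$ is the sum of the linear coefficients of the factors. Therefore
\[
[y]\,L_\alpha(1,y)=\sum_{i=2}^\ell\bigl(s_{i-1}\alpha_i-1\bigr)=d(\alpha)-(\ell-1),
\]
which is \eqref{eq:rho}.

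\textbf{The coefficient $\rho(\alpha)$.} For $d(\alpha)\ge2$, \eqref{eq:dehom} identifies $[y]\,L_\alpha(1,y)$ with $[e_1[X]^{d(\alpha)-2}e_2[X]]L_\alpha[X]=\rho(\alpha)$. In the remaining cases $d(\alpha)\le1$ we check directly that $d(\alpha)-\ell(\alpha)+1=0$:
\begin{itemize}
\item If $\ell=1$, then $d(\alpha)=0$ and the value is $0-1+1=0$.
\item If $d(\alpha)=1$, then $\alpha=(1,1)$ and the value is $1-2+1=0$.
\end{itemize}
Both agree with the convention $\rho(\alpha)=0$.

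The only mild subtlety is this bookkeeping for small $d(\alpha)$. No step is a genuine obstacle.
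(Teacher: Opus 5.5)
Your proof is correct and follows the paper's argument: you telescope the factorial quotient, invoke Sagan--Savage for each binomial factor, and read off the constant and linear terms after the substitution $(e_1[X],e_2[X])=(1,y)$. The differences are cosmetic: you obtain \eqref{eq:rho} by summing the linear coefficients $rs-1$ from \eqref{eq:Lrs-lin} over the binomial factors, whereas the paper compares linear coefficients directly in the factorial quotient. You also check the degenerate cases $d(\alpha)\le1$ for $\rho$ explicitly, which the paper leaves implicit.
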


\begin{proof}
Cancelling factorials in the product on the right of \eqref{eq:factor} gives $L_\alpha[X]$. Its factors are $e$-positive and have total degree $\sum_{i<j}\alpha_i\alpha_j=d(\alpha)$. After the coordinate substitution $(e_1[X],e_2[X])=(1,y)$, each factor has constant term $1$. Hence $L_\alpha(1,0)=1$, and the coefficient of $e_1[X]^{d(\alpha)}$ in $L_\alpha[X]$ is $1$. This proves $L_\alpha[X]-e_1[X]^{d(\alpha)}\in\N[e_1[X],e_2[X]]$.

For \eqref{eq:rho}, use the factorial quotient defining $L_\alpha[X]$. The recurrence gives $U_1(1,y)=1$ and $U_i(1,y)=1+(i-2)y+O(y^2)$ for $i\ge2$. Comparing linear coefficients in the quotient yields
\[
[y]\,L_\alpha(1,y)=\frac{(n-1)(n-2)}{2}-\sum_{t=1}^\ell\frac{(\alpha_t-1)(\alpha_t-2)}{2}=\frac{n^2-\sum_t\alpha_t^2}{2}-\ell+1=d(\alpha)-\ell+1.\qedhere
\]
\end{proof}

A dominance relation $\alpha\unrhd\beta$ can be realized by moving cells to lower rows of a Young diagram \cite{Bry}. The following lemma gives a specific sequence of such moves.

\begin{lemma}\label{lem:balance}
Let $\alpha\dom\beta$ be partitions of $n$. There is a chain $\alpha=\alpha^{(0)},\alpha^{(1)},\ldots,\alpha^{(k)}=\beta$ of partitions, each dominating $\beta$, such that each step $\mu\to\mu'$ replaces two entries $A$ and $B$ with $A\ge B+2$ by $A-1$ and $B+1$; here $B=0$ is allowed, in which case the step creates a new part equal to $1$. For each step, $d(\mu')-d(\mu)=A-B-1\ge1$, while $\rho(\mu')-\rho(\mu)=A-B-1$ if $B\ge1$ and $A-2$ if $B=0$.
\end{lemma}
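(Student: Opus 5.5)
The plan has two parts. First we construct the chain by a greedy "balancing" move that stays above $\beta$ in dominance order. Then we compute the changes in $d$ and $\rho$ for a single move, using the formula $d(\mu)=(n^2-\sum_i\mu_i^2)/2$ from Lemma~\ref{lem:factor} and $\rho(\mu)=d(\mu)-\ell(\mu)+1$ from \eqref{eq:rho}.

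For the chain, we induct on the nonnegative integer $\sum_k\bigl((\mu_1+\cdots+\mu_k)-(\beta_1+\cdots+\beta_k)\bigr)$, where $\mu$ is the current partition with $\mu\dom\beta$ and $\mu\ne\beta$. Pad $\mu$ and $\beta$ with zeros to the same length. Let $i$ be the smallest index with $\mu_i\ne\beta_i$; dominance forces $\mu_i>\beta_i$. Let $j>i$ be the smallest index with $\mu_j<\beta_j$; it exists because $|\mu|=|\beta|$. The move is to replace $A:=\mu_i$ by $A-1$ and $B:=\mu_j$ by $B+1$; taking $i$ to be the last row among those equal to $\mu_i$ and $j$ the first among those equal to $\mu_j$ keeps the result weakly decreasing. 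We need $A\ge B+2$, which follows from $\mu_i>\beta_i\ge\beta_j>\mu_j$. The new partial sums agree with the old ones except for $k\in[i,j)$, where they drop by $1$. For $k\in[i,j)$ we have $\mu_t\ge\beta_t$ for all $t\le k$, with strict inequality at $t=i$, so the old partial-sum excess there is at least $1$. Hence the new partition still dominates $\beta$, and the potential strictly decreases. The main obstacle is the bookkeeping that keeps the move inside partitions while preserving dominance. If choosing extremal rows becomes awkward, a fallback is to perform the move on multisets, which is legitimate because $L_\alpha$ and $d$ are invariant under rearrangement (as noted at the start of this subsection), and then sort.

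For the degree computation, $d(\mu')-d(\mu)=\tfrac12\bigl(A^2+B^2-(A-1)^2-(B+1)^2\bigr)=A-B-1\ge1$. The length is unchanged when $B\ge1$, and it increases by $1$ when $B=0$. Therefore $\rho(\mu')-\rho(\mu)$ equals $A-B-1$ when $B\ge1$, and equals $A-1-1=A-2$ when $B=0$. One caveat: $\rho$ is defined by the formula only when $d\ge2$. When $d(\mu)<2$ the formula gives $\rho(\mu)=0$ anyway, since $d=0$ forces $\ell=1$ and $d=1$ forces $\mu=(1,1)$ with $\ell=2$. So the identity $\rho=d-\ell+1$ holds throughout, and the stated differences follow.
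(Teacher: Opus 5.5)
Your proposal is correct and follows essentially the same route as the paper: the same move (decrease the last row carrying the value of the first surplus row, increase the first deficit row), the same observation that only the partial sums for indices in $[i,j)$ drop, each by one, from an excess of at least $1$, and the same computation of the changes in $d$ and $\rho$ via $d(\mu)=(n^2-\sum_t\mu_t^2)/2$ and $\rho=d-\ell+1$. The differences are cosmetic: you measure progress by the total partial-sum excess rather than by $\sum_t|\mu_t-\beta_t|$; your re-choice of $j$ as the first row with value $\mu_j$ is automatic, since $\mu_{j-1}\ge\beta_{j-1}\ge\beta_j>\mu_j$ (as the paper notes); and your check of the cases $d(\mu)<2$ for $\rho$ is a small detail that the paper leaves implicit.
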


\begin{proof}
Append zeros so that every partition of $n$ becomes a sequence with $n$ entries. Suppose $\mu\dom\beta$ and $\mu\ne\beta$. Let $p$ be the first index with $\mu_p\ne\beta_p$. Dominance gives $\mu_p>\beta_p$. Since $|\mu|=|\beta|$, there is an index $q>p$ with $\mu_q<\beta_q$; choose the first such index. Let $i$ be the last index with $\mu_i=\mu_p$. Then
\[
\mu_p\ge\beta_p+1\ge\beta_q+1\ge\mu_q+2,
\]
so $p\le i<q$. Let $A:=\mu_i$ and $B:=\mu_q$. Decrease $\mu_i$ by $1$ and increase $\mu_q$ by $1$.

The resulting sequence is a partition. The choice of $i$ gives $\mu_{i+1}<\mu_i$. If $i<q-1$, then $q>p+1$ and $\mu_{q-1}\ge\beta_{q-1}\ge\beta_q>\mu_q$, by the choice of $q$. If $i=q-1$, the changed entries remain in order because $A-1\ge B+1$.

Only the partial sums ending at indices $i\le j<q$ decrease, each by one. Before the move, each of these sums exceeds the corresponding sum of $\beta$ by at least $\mu_p-\beta_p\ge1$. Thus the new partition still dominates $\beta$. Moreover, $\mu_i=\mu_p>\beta_p\ge\beta_i$ and $\mu_q<\beta_q$, so $\sum_j|\mu_j-\beta_j|$ decreases by two. Repeating the construction therefore reaches $\beta$.

The move decreases the sum of squared parts by $2(A-B-1)$, so the degree formula in Lemma~\ref{lem:factor} gives $d(\mu')-d(\mu)=A-B-1$. The length is unchanged for $B\ge1$ and increases by one for $B=0$. The stated changes in $\rho(\mu)$ follow from $\rho(\mu)=d(\mu)-\ell(\mu)+1$.
\end{proof}

\subsection{Comparison for consecutive partitions}\label{sec:adjacent}

For consecutive partitions in this chain, we use the Lucas atoms of Sagan and Tirrell \cite[Proposition~2.2]{ST}. These are polynomials $P_r[X]\in\N[e_1[X],e_2[X]]$, indexed by $r\ge2$, satisfying
\begin{equation}\label{eq:atoms}
U_n[X]=\prod_{r\mid n,\ r\ge2}P_r[X]\qquad(n\ge1).
\end{equation}
For example, $P_2[X]=e_1[X]$, $P_3[X]=e_1[X]^2+e_2[X]$ and $P_4[X]=e_1[X]^2+2e_2[X]$, with $U_4[X]=P_2[X]P_4[X]$.

\begin{lemma}\label{lem:atoms}
Let $n_1,\ldots,n_s$ and $m_1,\ldots,m_t$ be positive integers such that, for every $r\ge2$, at least as many of the $n_i$ as of the $m_j$ are divisible by $r$. Then
\[
\frac{U_{n_1}[X]U_{n_2}[X]\cdots U_{n_s}[X]}{U_{m_1}[X]U_{m_2}[X]\cdots U_{m_t}[X]}\in\N[e_1[X],e_2[X]].
\]
\end{lemma}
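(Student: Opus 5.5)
By \eqref{eq:atoms}, every factor $U_{n_i}[X]$ and $U_{m_j}[X]$ splits into Lucas atoms, so the quotient is a product of atoms. I plan to show that all atoms in the denominator cancel, leaving a product of atoms with nonnegative exponents. Since each $P_r[X]$ lies in $\N[e_1[X],e_2[X]]$ by Sagan and Tirrell \cite[Proposition~2.2]{ST}, and this set is closed under products, that would prove the lemma.

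\textbf{Step 1: exponent count.} Applying \eqref{eq:atoms} to every factor gives
\[
\frac{\prod_{i=1}^sU_{n_i}[X]}{\prod_{j=1}^tU_{m_j}[X]}
=\prod_{r\ge2}P_r[X]^{\,a_r-b_r},
\]
where $a_r:=\#\{i: r\mid n_i\}$ and $b_r:=\#\{j: r\mid m_j\}$. Only finitely many $r$ have $a_r$ or $b_r$ nonzero, because $r$ must divide one of the finitely many given integers.

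\textbf{Step 2: the identity holds in the right ring.} The quotient is taken in the fraction field of $\Z[e_1[X],e_2[X]]$. This ring is a domain, since $e_1[X]$ and $e_2[X]$ are algebraically independent. Each $P_r[X]$ is nonzero: for example, $U_r[X]\ne0$ because it has leading term $e_1[X]^{r-1}$, and $P_r[X]$ divides $U_r[X]$. So the rearrangement in Step 1 is legitimate.

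\textbf{Step 3: conclude.} The hypothesis says exactly that $a_r\ge b_r$ for every $r\ge2$. Hence every exponent $a_r-b_r$ is nonnegative, and the right-hand side of Step 1 is a finite product of elements of $\N[e_1[X],e_2[X]]$. It therefore lies in $\N[e_1[X],e_2[X]]$.

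\textbf{Main obstacle.} The argument has no real difficulty. The only point needing care is Step 2: one must confirm that the atoms are genuine polynomials with nonnegative coefficients in the present symmetric-function setting, not just rational functions. I would take this directly from \cite{ST} as quoted before \eqref{eq:atoms}. If needed, the transfer from their two-parameter setting follows by the identification $s\mapsto e_1[X]$, $t\mapsto e_2[X]$ of the Lucas recurrence.
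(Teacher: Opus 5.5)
Your proposal is correct and matches the paper's proof: expand each factor with the atom factorization \eqref{eq:atoms}, note that the exponent of $P_r[X]$ is $a_r-b_r\ge0$ by hypothesis, and conclude because products of the $e$-positive atoms stay in $\N[e_1[X],e_2[X]]$. Your Step~2 check, that the atoms are nonzero elements of an integral domain, is harmless extra care that the paper leaves implicit.
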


\begin{proof}
By \eqref{eq:atoms}, the exponent of $P_r[X]$ in the quotient is the number of $n_i$ divisible by $r$ minus the number of $m_j$ divisible by $r$. The hypothesis makes every exponent nonnegative. The quotient is therefore a product of $e$-positive Lucas atoms. This is a special case of Sagan and Tirrell \cite[Theorem~1.1]{ST}.
\end{proof}

\begin{lemma}\label{lem:local}
Let $\mu'$ be obtained from $\mu$ by one step of Lemma~\ref{lem:balance}, and let $t:=d(\mu')-d(\mu)$. Then
\begin{equation}\label{eq:local-positive}
L_{\mu'}[X]-e_1[X]^tL_\mu[X]\in\N[e_1[X],e_2[X]],
\end{equation}
and this difference is zero exactly when the step has $(A,B)=(2,0)$, that is, when it replaces a part equal to $2$ by two parts equal to $1$.
\end{lemma}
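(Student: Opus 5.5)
The plan is to reduce to a two-part comparison, since only two parts change. Let $\nu$ be the list of the remaining parts of $\mu$. By the factorization \eqref{eq:factor}, with the parts ordered so that the two changed parts come last, we get $L_\mu[X]=L_{\nu\cup(A+B)}[X]\,L_{B,A}[X]$ and $L_{\mu'}[X]=L_{\nu\cup(A+B)}[X]\,L_{B+1,A-1}[X]$. The common factor is $e$-positive by Lemma~\ref{lem:factor}. Moreover $t=(B+1)(A-1)-BA=A-B-1$. So it suffices to prove that
\[
L_{B+1,A-1}[X]-e_1[X]^{A-B-1}L_{B,A}[X]\in\N[e_1[X],e_2[X]].
\]
Directly from the factorial quotient,
\[
L_{B+1,A-1}=R\,U_A,\qquad L_{B,A}=R\,U_{B+1},\qquad R:=\frac{U_{A+B}!}{U_{B+1}!\,U_A!}.
\]
Iterating \eqref{eq:U} gives
\[
U_A-e_1^{t}U_{B+1}=e_2\sum_{i=0}^{t-2}e_1^iU_{A-2-i}\in\N[e_1,e_2].
\]
Whenever $R$ is itself an $e$-positive polynomial, the claim therefore follows at once.

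To decide when $R$ is $e$-positive, apply Lemma~\ref{lem:atoms}. For each $r\ge2$ we need
\[
\lfloor (A+B)/r\rfloor\ge\lfloor (B+1)/r\rfloor+\lfloor A/r\rfloor .
\]
This inequality fails exactly when $r$ divides both $A$ and $B+1$. Set $g:=\gcd(A,B+1)$, $A=kg$, $B+1=lg$, with $k>l$.
\begin{itemize}
\item If $g=1$, $R$ is $e$-positive and we are done.
\item If $g\ge2$, the same divisor count shows that $R\,U_g$ is $e$-positive. The claim then reduces to showing that $U_{kg}/U_g-e_1^{t}\,U_{lg}/U_g$ is $e$-positive, where both quotients are atom products by \eqref{eq:atoms}.
\end{itemize}
For the second case I would use the classical substitution $U_{kg}/U_g=U_k[\,\cdot\,]$ evaluated at $e_1\mapsto V_g$ and $e_2\mapsto(-1)^{g+1}e_2^g$, where $V_g=U_{g+1}+e_2U_{g-1}$ is the companion Lucas polynomial. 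Then I would repeat the recurrence telescoping in these new variables. Its error terms are of the form $e_2'\,(\cdots)$, and I would compare them with $e_1^{t}=e_1^{(k-l)g}$ using $V_g\succeq e_1^g$.

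This $g\ge2$ case is the step I expect to be the main obstacle. When $g$ is even, $e_2'$ is negative, so the telescoping is no longer termwise positive. There I would try instead to regroup directly in atoms: factor $U_A-e_1^tU_{B+1}$ as $U_g$ times an explicit $e$-positive expression. The first thing I would try is to write $U_{kg}-e_1^{(k-l)g}U_{lg}$ as a sum of products $U_g\,U_{jg}\cdots$ using the addition formula $U_{m+n}=U_{m+1}U_n+e_2U_mU_{n-1}$ with $m,n$ multiples of $g$ (or $g\pm1$). In the worst case I would fall back on a case split by the parity of $g$.

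For the vanishing statement, the difference is homogeneous with constant term $0$ under $(e_1,e_2)=(1,y)$. Its coefficient of $e_1^{d(\mu')-2}e_2$ is $\rho(\mu')-\rho(\mu)$, which by Lemma~\ref{lem:balance} equals $A-B-1\ge1$ if $B\ge1$ and $A-2$ if $B=0$. Since the difference is $e$-positive, it is zero exactly when this coefficient is $0$, i.e.\ $B=0$ and $A=2$. Conversely, for $(A,B)=(2,0)$ we have $L_{1,1}=U_2=e_1=e_1^{1}L_{0,2}$, so the difference vanishes.
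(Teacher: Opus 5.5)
\textbf{There is a genuine gap, at exactly the step you flag.} Your reduction coincides with the paper's. With $g=\gcd(A,B+1)$, $A=kg$, $B+1=lg$, you correctly obtain
\[
L_{B+1,A-1}-e_1^tL_{B,A}=(R\,U_g)\cdot\frac{U_{kg}-e_1^{(k-l)g}U_{lg}}{U_g},
\]
with $R\,U_g$ $e$-positive by Lemma~\ref{lem:atoms}. This is the paper's factorization \eqref{eq:local-atom-factor}. But for $g\ge2$ you never prove that the second factor is $e$-positive.

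Your substitution $U_{kg}/U_g=U_k[V_g,(-1)^{g+1}e_2^g]$ would settle odd $g$ once written out, since then $e_2'=e_2^g$ and $V_g-e_1^g$ is $e$-positive. For even $g$ it breaks down. The telescoped error $-e_2^g\sum_i V_g^{\,i}\,U_{(k-2-i)g}/U_g$ is coefficientwise nonpositive, because each $U_{mg}/U_g$ is $e$-positive. It would therefore have to be compensated by $(V_g^{k-l}-e_1^{(k-l)g})\,U_{lg}/U_g$, and you give no way to do that. ``Regroup in atoms'' and ``split by parity'' are not arguments. As written, \eqref{eq:local-positive} is proved only for $g=1$, and modulo details for odd $g$.

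\textbf{The missing idea.} Telescope in blocks of length $g$ rather than length $1$. Then apply the addition formula you already wrote down, with $m=g$ and $n=gj$, so that each block difference is visibly a multiple of $U_g$. Using $U_{g(j+1)}=U_{g+1}U_{gj}+e_2U_gU_{gj-1}$,
\[
\frac{U_{kg}-e_1^{(k-l)g}U_{lg}}{U_g}=\sum_{j=l}^{k-1}e_1^{g(k-j-1)}\Bigl(\frac{U_{gj}}{U_g}\bigl(U_{g+1}-e_1^g\bigr)+e_2\,U_{gj-1}\Bigr).
\]
Here $U_{gj}/U_g$ is $e$-positive by Lemma~\ref{lem:atoms}, since every divisor of $g$ divides $gj$. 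Also $U_{g+1}-e_1^g$ is $e$-positive by \eqref{eq:U-explicit}. So this works uniformly in $g$; for $g=1$ it is just your one-step telescoping.

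\textbf{Index slip.} Your one-step formula should read $U_A-e_1^tU_{B+1}=e_2\sum_{i=0}^{t-1}e_1^iU_{A-2-i}$. With upper limit $t-2$ it gives $0$ instead of $e_2$ for $U_3-e_1U_2$.

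\textbf{Equality case.} Your argument is correct and differs from the paper's, which checks which block terms can vanish. You use the linear coefficient $\rho(\mu')-\rho(\mu)$ from Lemma~\ref{lem:balance}, which is positive unless $(A,B)=(2,0)$, and you check $(2,0)$ directly. This uses no positivity at all. Your sentence ``since the difference is $e$-positive, it is zero exactly when this coefficient is $0$'' therefore claims more than you need, and more than you justify.
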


\begin{proof}
Suppose the move replaces $A,B$ by $A-1,B+1$, where $A\ge B+2$ and $B\ge0$. Then $t=A-B-1$. Let $\eta$ be the list of remaining parts, and let $G[X]:=L_{(A+B,\eta)}[X]$. Factoring the two multinomial coefficients gives
\[
L_\mu[X]=G[X]L_{B,A}[X],\qquad L_{\mu'}[X]=G[X]L_{B+1,A-1}[X].
\]
By Lemma~\ref{lem:factor}, applied after sorting the entries of $(A+B,\eta)$, $G[X]$ is nonzero and $e$-positive. It suffices to prove the assertion for $L_{B+1,A-1}[X]-e_1[X]^tL_{B,A}[X]$.

By \eqref{eq:Lrs}, $L_{B+1,A-1}[X]=L_{B,A}[X]U_A[X]/U_{B+1}[X]$ in the fraction field of $\Z[e_1[X],e_2[X]]$. Let $g:=\gcd(A,B+1)$. Then
\begin{equation}\label{eq:local-atom-factor}
L_{B+1,A-1}[X]-e_1[X]^tL_{B,A}[X]
=\frac{U_g[X]L_{B,A}[X]}{U_{B+1}[X]}\cdot\frac{U_A[X]-e_1[X]^tU_{B+1}[X]}{U_g[X]}.
\end{equation}
For $B=0$ this reads $U_A[X]-e_1[X]^{A-1}=1\cdot(U_A[X]-e_1[X]^{A-1})$.

Consider first $U_g[X]L_{B,A}[X]/U_{B+1}[X]=U_g[X]U_{A+B}![X]/(U_{B+1}[X]U_A![X]U_B![X])$. Fix $r\ge2$. The number of multiples of $r$ in $\{1,\ldots,m\}$ is $\lfloor m/r\rfloor$, and
\[
\Bigl\lfloor\frac{A+B}{r}\Bigr\rfloor\ge\Bigl\lfloor\frac{A}{r}\Bigr\rfloor+\Bigl\lfloor\frac{B}{r}\Bigr\rfloor.
\]

If $r\nmid B+1$, this inequality verifies the condition in Lemma~\ref{lem:atoms}. Suppose $r\mid B+1$, so that $B\equiv-1\pmod r$. If $r\nmid A$, the remainders of $A$ and $B$ modulo $r$ sum to at least $r$, giving $\lfloor(A+B)/r\rfloor=\lfloor A/r\rfloor+\lfloor B/r\rfloor+1$. If $r\mid A$, then $r\mid g$, and the extra factor $U_g[X]$ supplies the required multiple. Thus the condition holds for every $r$, and the first factor is a nonzero element of $\N[e_1[X],e_2[X]]$.

For the second factor, write $B+1=gu$ and $A=gv$, where $1\le u<v$. Then $t=g(v-u)$, and summing consecutive differences gives
\begin{equation}\label{eq:local-Lucas-sum}
\frac{U_A[X]-e_1[X]^tU_{B+1}[X]}{U_g[X]}
=\sum_{j=u}^{v-1}e_1[X]^{g(v-j-1)}\,\frac{U_{g(j+1)}[X]-e_1[X]^gU_{gj}[X]}{U_g[X]}.
\end{equation}
Thus it suffices to show that
\[
\frac{U_{g(j+1)}[X]-e_1[X]^gU_{gj}[X]}{U_g[X]}
\]
is $e$-positive for $u\le j<v$.
We use the identity
\begin{equation}\label{eq:addition}
U_{m+n}[X]=U_m[X]U_{n+1}[X]+e_2[X]U_{m-1}[X]U_n[X]\qquad(m\ge1,\ n\ge0).
\end{equation}
It holds for $n=0$ and $n=1$ by \eqref{eq:U}, and if it holds for $n-1$ and $n$, then
\begin{align*}
U_{m+n+1}[X]&=e_1[X]U_{m+n}[X]+e_2[X]U_{m+n-1}[X]\\
&=U_m[X]\bigl(e_1[X]U_{n+1}[X]+e_2[X]U_n[X]\bigr)\\
&\quad+e_2[X]U_{m-1}[X]\bigl(e_1[X]U_n[X]+e_2[X]U_{n-1}[X]\bigr)\\
&=U_m[X]U_{n+2}[X]+e_2[X]U_{m-1}[X]U_{n+1}[X].
\end{align*}
With $m=gj$ and $n=g$, \eqref{eq:addition} gives
\begin{equation}\label{eq:local-Lucas-increment}
\frac{U_{g(j+1)}[X]-e_1[X]^gU_{gj}[X]}{U_g[X]}
=\frac{U_{gj}[X]}{U_g[X]}\,\bigl(U_{g+1}[X]-e_1[X]^g\bigr)+e_2[X]U_{gj-1}[X].
\end{equation}
Since every divisor of $g$ divides $gj$, Lemma~\ref{lem:atoms} gives $U_{gj}[X]/U_g[X]\in\N[e_1[X],e_2[X]]$. It follows from \eqref{eq:U-explicit} that
\[
U_{g+1}[X]-e_1[X]^g
=\sum_{k=1}^{\lfloor g/2\rfloor}
\binom{g-k}{k}e_1[X]^{g-2k}e_2[X]^k
\in\N[e_1[X],e_2[X]].
\]
Hence each term in \eqref{eq:local-Lucas-increment} is $e$-positive. Equation~\eqref{eq:local-Lucas-sum} shows that the second factor in \eqref{eq:local-atom-factor} is $e$-positive as well. This proves \eqref{eq:local-positive}.

It remains to determine the equality case. The first factor in \eqref{eq:local-atom-factor} is nonzero, and all terms in \eqref{eq:local-Lucas-sum} are $e$-positive. The difference therefore vanishes if and only if every term in this sum is zero. For $g\ge2$, the polynomial $U_{g+1}[X]-e_1[X]^g$ is nonzero, so every term is nonzero. For $g=1$, the $j$th term is $e_1[X]^{v-j-1}e_2[X]U_{j-1}[X]$, which vanishes only for $j=1$. Thus equality holds precisely when $g=1$, $u=1$ and $v=2$, or equivalently $(A,B)=(2,0)$.
\end{proof}

\subsection{Proof of Theorem~\ref{thm:multi}}\label{sec:multi-proof}

We can now prove Theorem~\ref{thm:multi} by combining the comparisons for consecutive partitions along a dominance chain.

\begin{proof}[Proof of Theorem~\ref{thm:multi}]
Take a chain $\alpha=\alpha^{(0)},\ldots,\alpha^{(k)}=\beta$ from Lemma~\ref{lem:balance}, allowing $k=0$ when $\alpha=\beta$. Let $d_i:=d(\alpha^{(i)})$, $t_i:=d_{i+1}-d_i$ and $\kappa_i:=\rho(\alpha^{(i+1)})-\rho(\alpha^{(i)})$. Lemma~\ref{lem:balance} gives $t_i>0$ and $\kappa_i\ge0$. By Lemma~\ref{lem:factor}, $\sum_i\kappa_i=\rho(\beta)-\rho(\alpha)=d(\beta)-d(\alpha)-\ell(\beta)+\ell(\alpha)=\kappa$. Hence $d(\beta)-d(\alpha)=\sum_it_i\ge0$ and $\kappa\ge0$. Summing the adjacent differences gives
\[
L_\beta[X]-e_1[X]^{d(\beta)-d(\alpha)}L_\alpha[X]
=\sum_{i=0}^{k-1}e_1[X]^{d(\beta)-d_{i+1}}\bigl(L_{\alpha^{(i+1)}}[X]-e_1[X]^{t_i}L_{\alpha^{(i)}}[X]\bigr).
\]
Every summand is $e$-positive by Lemma~\ref{lem:local}, proving \eqref{eq:multi-positive}.

After the coordinate substitution $(e_1[X],e_2[X])=(1,y)$, Lemma~\ref{lem:factor} gives constant term zero and linear coefficient $\kappa$ for the difference. If $\kappa>0$, then $d(\beta)\ge2$, and homogeneity shows that the coefficient of $e_1[X]^{d(\beta)-2}e_2[X]$ is $\kappa$. Subtracting this term leaves an $e$-positive polynomial, which proves \eqref{eq:multi-base}. In particular, the difference is nonzero.

If $\kappa=0$, every $\kappa_i$ is zero. Lemma~\ref{lem:balance} then forces each step to replace $(2,0)$ by $(1,1)$. Consequently, $\beta$ is obtained from $\alpha$ by splitting some parts equal to $2$ into two parts equal to $1$. Conversely, each such replacement multiplies $L_\alpha[X]$ by $U_2[X]=e_1[X]$ and increases its degree by one. A sequence of these replacements gives $L_\beta[X]=e_1[X]^{d(\beta)-d(\alpha)}L_\alpha[X]$, and the preceding paragraph implies $\kappa=0$.
\end{proof}

\par\medskip\noindent\textbf{AI declaration.} During the preparation of this work, the authors used GPT-5.6 Sol (\mbox{OpenAI}) to assist with proof development, language editing, proofreading, and drafting parts of the manuscript. The authors reviewed and revised this content, independently verified the proofs, and take full responsibility for the manuscript.

\end{document}